\theoremstyle{definition}
\newtheorem{definition}{Definition}[section]
\theoremstyle{plain}
\newtheorem{theorem}[definition]{Theorem}
\newtheorem{lemma}[definition]{Lemma}
\newtheorem{proposition}[definition]{Proposition}
\DeclareMathOperator{\rk}{rk}
\DeclareMathOperator{\Aut}{Aut}
\DeclareMathOperator{\End}{End}
\DeclareMathOperator{\Bir}{Bir}
\DeclareMathOperator{\disc}{discr}
\DeclareMathOperator{\Num}{Num}
\newcommand{\QQ}{\mathbb{Q}}
\newcommand{\FF}{\mathbb{F}}
\newcommand{\RR}{\mathbb{R}}
\newcommand{\CC}{\mathbb{C}}
\newcommand{\ZZ}{\mathbb{Z}}
\newcommand{\NN}{\mathbb{N}}
\renewcommand{\O}{\mathcal{O}}
\title[K3 Salem]{On the stable dynamical spectrum of complex surfaces}
\author{Simon Brandhorst}
\address{Insitut für Algebraische Geometrie, Leibniz Universität Hannover,
	Welfengarten 1, 30167 Hannover, Germany}
\email{brandhorst@math.uni-hannover.de}
\date{August, 2017}
\keywords{complex surface, dynamical degree, Salem number, entropy, automorphism}
\subjclass[2010]{Primary: 14J28, Secondary: 14J50, 37F10}
\begin{document}
\begin{abstract}
We characterize Salem numbers which have some power arising as dynamical degree of an automorphism on a complex (projective) 2-Torus, K3 or Enriques surface.
\end{abstract}
\maketitle
\section{Introduction}
To a bimeromorphic transformation $F: X \dashrightarrow X$ of a Kähler surface one can associate its \emph{dynamical degree}
\[\lambda(F)=\limsup_{n \rightarrow \infty} (||(F^n)^*||)^{1/n},\] 
where $F^*$ denotes the action on $H^2(X,\ZZ)$ and $||\cdot||$ is any norm on $\End(H^2(X,\ZZ))$. The dynamical degree is a bimeromorphic invariant of $(X,F)$ which measures the dynamical complexity of $F$. In the projective case, it describes the asymptotic degree growth of defining equations for $F$. We call $F$ algebraically stable if $(F^n)^*=(F^*)^n$.
Passing to a bimeromorphic model of $(X,F)$ this can be achieved for surfaces \cite{diller_favre:dynamics_surface}. Hence, in this case the dynamical degree is an algebraic integer - the spectral radius of $F^*$.
Its logarithm $\log \lambda(F)$ is an upper bound for the topological entropy $h(F)$. If moreover $F: X \rightarrow X$ is an automorphism, then they agree and $\lambda(F)$ is a Salem number, that is, an algebraic integer $\lambda>1$ which is Galois conjugate to $1/\lambda$ and all whose other conjugates lie on the unit circle.
Conversely, if $\lambda(F)$ is a Salem number of degree at least $4$, then $F$ is conjugate to an automorphism on some birational model of $X$ \cite[Thm A]{cantat:dynamical_degrees}.
If the dynamical degree of an automorphism $F$ of a surface $X$ is $\lambda(F)>1$, then $X$ is a blow up of the projective plane in at least $10$ points, or a blow up of a 2-Torus, a K3 or an Enriques surface \cite{cantat:classification}.
Let $\mathcal{K}$ be a class of surfaces. We define the dynamical spectrum of surfaces of type $\mathcal{K}$ as
\[\Lambda(\mathcal{K},\CC)= \bigcup \{\lambda(F) | F \in \Bir(X),\; X/\CC \mbox{ is a } \mathcal{K} \mbox{ surface}\},\]
and its counterpart for projective surfaces
\[\Lambda^{proj}(\mathcal{K},\CC)= \bigcup \{\lambda(F) | F \in \Bir(X),\; X/\CC \mbox{ is a projective } \mathcal{K} \mbox{ surface}\}.\]
If $X$ is a surface of type $\mathcal{K} \in \{$ 2-Torus, K3, Enriques $\}$, then its canonical divisor is nef and hence $\Bir(X)=\Aut(X)$.
The Kummer construction and the fact that the universal cover of an Enriques surface is a K3 surface give the inclusions 
\[\Lambda(\mbox{2-Tori},\CC), \Lambda(\mbox{Enriques},\CC)\subseteq \Lambda(K3,\CC).\]
For rational surfaces the contribution to the dynamical spectrum coming from automorphisms is described in terms of Weyl groups in \cite{uehara:rational_automorphism_entropy}. However, for a concrete Salem number it seems to be hard to decide whether it is a spectral radius of an element of a certain Weyl group. 
For genuinely birational maps on rational surfaces, dynamical degrees may also be Pisot numbers and less seems to be known \cite{cantat:dynamical_degrees}.
The dynamical spectrum of complex 2-Tori (respectively Abelian surfaces) is completely described in \cite{reschke:tori_salem,reschke:abelian_salem}.
In a very recent preprint \cite[Cor. 1.1]{lenny:equivariant_witt}
Bayer-Fluckiger and Taelman completely characterize which Salem numbers of degree $22$ arize as dynamical degrees of (non-projective) K3 surfaces.

However, in lower degrees or projective K3 surfaces as well as Enriques surfaces our picture is much less complete. 
In each even degree $d$ there is a minimal Salem number $\lambda_d$. Conjecturally, the smallest Salem number is Lehmer's number $\lambda_{10}\approx 1.17628$. In \cite{mcmullen:minimum} McMullen gives a strategy to decide whether a single given Salem number $\lambda$ is the dynamical degree of an automorphism of a complex projective K3 surface. 
This strategy is then applied in \cite{mcmullen:minimum,brandhorst_alonso:minimal_salem} to show that the minimal Salem numbers
\[\lambda_d \in \Lambda^{proj}( K3,\CC) \quad \Leftrightarrow \quad 14,16\neq d \leq 18.\]
 Using the strategy in \cite{mcmullen:minimum} and the improved positivity test from \cite{brandhorst_alonso:minimal_salem} one easily obtains
 \[\lambda_{14}^9,\lambda_{16}^7,\lambda_{20}^{11} \in \Lambda^{proj}( K3,\CC).\]
 In the non-projective realm we get
 \[\lambda_{14},\lambda_{16},\lambda_{20},\lambda_{22} \in \Lambda( K3,\CC).\]

  Given the dynamical context of the question, it is natural to ask for stable realizations of dynamical degrees instead, that is, whether there exists some power of a given Salem number which arises as a dynamical degree.
 \begin{theorem}\label{thm:main2}
 	Let $\lambda$ be a Salem number of degree $d\leq 20$. Then there is an $n \in \NN$, a projective K3 surface $X$ and an automorphism $F: X \rightarrow X$ with dynamical degree $\lambda(F)=\lambda^n$.
 \end{theorem}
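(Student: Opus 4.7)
The plan is to follow the lattice-theoretic strategy of McMullen \cite{mcmullen:minimum}, together with the positivity test of \cite{brandhorst_alonso:minimal_salem}, applied to some power $\mu=\lambda^n$ instead of $\lambda$ itself. By the surjectivity of the period map and the strong Torelli theorem, realizing $\mu$ as a dynamical degree of an automorphism of a projective complex K3 surface amounts to constructing an isometry $f$ of the even unimodular K3 lattice $L_{K3}\cong U^{\oplus 3}\oplus E_8(-1)^{\oplus 2}$ whose characteristic polynomial factors as $S_\mu(x)\cdot C(x)$, with $S_\mu$ the Salem polynomial of $\mu$ of degree $d$ and $C=\prod_i \Phi_{m_i}$ a product of cyclotomic polynomials of total degree $22-d$, together with (i) an $f$-stable oriented positive $2$-plane inside the real eigenspace of $\mu+\mu^{-1}$ serving as the Hodge plane, and (ii) a primitive $f$-fixed class in its orthogonal complement with positive square, not orthogonal to any $f$-invariant $(-2)$-class.

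I would construct $f$ in two steps. First, on the transcendental side, I would build a $\ZZ[x]/S_\mu$-lattice $(T,f|_T)$ of signature $(2,d-2)$; since $d\leq 20$ this signature is strictly indefinite, and the local invariants of $T$ (in particular its discriminant form $q_T$) depend on the arithmetic of the order $\ZZ[\mu+\mu^{-1}]\subset\ZZ[\mu]$, which itself varies with $n$. Using the equivariant Witt-theoretic classification of Bayer-Fluckiger and Taelman \cite{lenny:equivariant_witt} one can realize any genus of such $(T,f|_T)$ compatible with the signature and discriminant constraints imposed locally. Second, on the Néron-Severi side, I would choose $C$ to contain a $\Phi_1$-factor (providing the $f$-invariant ample class) together with further cyclotomic factors whose combined lattice $(\NS,f|_{\NS})$ of signature $(1,21-d)$ has discriminant form $-q_T$, so the two pieces glue to the even unimodular $L_{K3}$. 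Note that since $\lambda$ is Salem the field $\QQ(\mu)=\QQ(\lambda)$ is independent of $n\geq 1$, so the degree $d$ is preserved under taking powers; the role of $n$ is precisely to move $q_T$ inside this fixed framework and make the glue problem solvable.

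The main obstacle is showing that such an $n$ always exists, uniformly across the infinite family of Salem polynomials of each given degree $d\leq 20$. I expect this to be handled by picking $n$ divisible by the primes ramifying in $\ZZ[\mu]$, which normalizes the local invariants of $T$, and then choosing $C=\Phi_1^{a}\cdot C_0$ for a fixed cyclotomic combination $C_0$ absorbing any remaining discriminant. The positivity condition (no invariant $(-2)$-obstruction to ampleness) is verified by the criterion of \cite{brandhorst_alonso:minimal_salem}; crucially, this condition becomes easier to satisfy as the rank $a$ of the $\Phi_1$-component grows, because the $f$-invariant sublattice of $\NS$ then contains a larger Weyl chamber in which an ample class can be placed. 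The tightest case is $d=20$, where only two cyclotomic slots remain; here one likely has to exhibit a concrete $n$ and $C$ adapted to each degree-$20$ Salem polynomial, which will be the principal technical step of the proof.
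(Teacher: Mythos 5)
Your decomposition of $H^2$ is the wrong way around, and this is fatal to the plan as written. For an automorphism $F$ of a \emph{projective} K3 surface, the period plane $H^{2,0}\oplus H^{0,2}$ is a positive-definite $2$-plane contained in $T(X)\otimes\RR$, and $F^*$ preserves both it and its negative-definite orthogonal complement inside $T(X)\otimes\RR$; hence $F^*|_{T(X)}$ lies in a compact group, and since it also preserves the lattice $T(X)$ it has finite order and cyclotomic characteristic polynomial. The Salem factor therefore cannot act on the transcendental lattice of a projective K3: it must be $S=\ker s(F^*)$ sitting inside $\Num(X)$ with hyperbolic signature $(1,d-1)$, while the period lies in the cyclotomic part. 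Your proposed $(T,f|_T)$ of signature $(2,d-2)$ carrying $S_\mu$ cannot exist (relatedly, a positive $2$-plane cannot sit inside the real eigenspace of $f+f^{-1}$ for the eigenvalue $\mu+\mu^{-1}$, which is a hyperbolic plane); the configuration with the Salem factor on the transcendental side, of signature $(3,d-3)$, is the one relevant to \emph{non-projective} K3 surfaces.

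Even after correcting the decomposition, you have not engaged with the actual difficulty, which is positivity: with $S\subseteq\Num(X)$ hyperbolic, $f|_S$ has infinite order and must preserve a chamber of the positive cone cut out by the $(-2)$-roots, i.e.\ admit no obstructing roots. The tests of \cite{mcmullen:minimum,brandhorst_alonso:minimal_salem} that you invoke certify positivity for one explicit isometry at a time and cannot yield the statement uniformly over the infinitely many Salem numbers of each degree $d\le 20$; moreover, enlarging the $\Phi_1$-block does not help, since in the relevant construction the $f$-invariant part is orthogonal to $\NS(X)$ (the period is taken generic inside it), and adding invariant directions to $\NS$ would only create more roots. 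The paper's route is: produce a rational isometry of $3U\oplus 2E_8$ with characteristic polynomial $s(x)(x-1)^{22-d}$ and $\ker s(f)$ hyperbolic (Lemma \ref{lem:iso_QQ}), pass to a power to make it integral (Lemma \ref{lem:iso_ZZ}), and then --- the key new ingredient --- twist $S$ by the square of a generator of a power of a split prime chosen via Chebotarev so that $|\det S|$ exceeds $4\disc s$, which by Proposition \ref{prop:preserves_chamber} forces $f|_S$ to preserve a chamber for purely metric reasons; a discriminant-form computation then shows the twisted $S$ still embeds primitively into $L_{K3}$ with complement the correspondingly twisted fixed part. Without a substitute for Proposition \ref{prop:preserves_chamber}, or some other mechanism giving positivity uniformly in the Salem polynomial, your argument does not close.
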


Considering the same question for complex tori and Enriques surfaces one finds that the answer depends only on the Betti and Hodge numbers. We can derive Theorem \ref{thm:main2} from the more general Theorem \ref{thm:main} below.

\begin{theorem}\label{thm:main}
	Let $\lambda$ be a Salem number with minimal polynomial $s(x)\in \ZZ[x]$ of degree $d$. Fix a class of surfaces $\mathcal{K} \in \{$2-Torus, K3, Enriques$\}$ and denote by $b_2(\mathcal{K})\in\{6,22,10\}$ the second Betti number of a surface in this class.
	Then there exists an $n \in \NN$ with $\lambda^n \in \Lambda(\mathcal{K},\CC)$ if and only if 
	\begin{enumerate}
		\item $d< b_2(\mathcal{K})$ or
		\item $d=b_2(\mathcal{K})$ and $-s(1)s(-1) \in \left(\QQ^\times\right)^2$.
	\end{enumerate}
If additionally $d\leq h^{1,1}(\mathcal{K})$, then we can find $n'\in \NN$ with $\lambda^{n'} \in \Lambda^{proj}(\mathcal{K},\CC)$.
\end{theorem}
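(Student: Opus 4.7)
My plan is to separate necessity and sufficiency, and in each direction reduce to a lattice-isometry problem via the global Torelli theorem.

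For necessity, suppose $\lambda^n$ is the dynamical degree of an automorphism $F$ of a $\mathcal{K}$-surface $X$. Then $F^*$ acts on $L := H^2(X,\ZZ)/\mathrm{torsion}$ as an isometry with eigenvalue $\lambda^n$, whose minimal polynomial $s_n$ has degree $d$ for $n$ outside a finite set (which I may avoid by replacing $n$ with a multiple). Hence $d \leq b_2(\mathcal{K}) = \mathrm{rk}(L)$. When $d = b_2(\mathcal{K})$, the characteristic polynomial of $F^*$ equals $s_n$, so matching the discriminants of $s_n$ and $L$ yields $-s_n(1)s_n(-1) \in (\QQ^\times)^2$ through the standard identity $\mathrm{disc}(s_n) \equiv \pm s_n(1)s_n(-1) \pmod{(\QQ^\times)^2}$ for reciprocal polynomials. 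A short Galois-theoretic computation based on $s_n(\pm 1) = \prod_{k=0}^{n-1} s(\pm \zeta_n^k)$, pairing $\zeta_n^k$ with $\zeta_n^{-k}$, then identifies this square class with that of $-s(1)s(-1)$, giving condition~(2).

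For sufficiency I would follow McMullen's strategy for K3 (with straightforward variants for the other two cases). The steps are: (i) construct an isometry $\phi$ of the abstract lattice $\Lambda_{\mathcal{K}}$ with characteristic polynomial $s_n(x)\cdot c(x)$, where $c$ is a product of cyclotomic polynomials chosen so that the total rank equals $b_2(\mathcal{K})$ and $\phi$ induces a signature compatible with the Hodge decomposition (for K3 this means one positive $2$-plane carrying a rotation and the rest negative; for Enriques all unit-circle directions must land on the negative side); (ii) pick a $\phi$-invariant positive $2$-plane on which $\phi$ rotates by an angle that is not a rational multiple of $\pi$, use it to define a period, and invoke global Torelli to produce $X$ and $F$ with $F^* = \phi$; (iii) in the projective refinement under $d \leq h^{1,1}(\mathcal{K})$, place a $\phi$-invariant ample class in the $h^{1,1}(\mathcal{K}) - d$ extra $(1,1)$-directions supplied by the cyclotomic padding, inside a Weyl chamber. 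The Enriques case descends from a $\phi$-equivariant K3 with a fixed-point-free involution; the 2-Torus case is essentially Reschke's.

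The hard part is realizing the prescribed characteristic polynomial $s_n \cdot c$ as an isometry of the \emph{specific} lattice $\Lambda_{\mathcal{K}}$, rather than of some abstractly isometric one. This is a local-global problem: at each prime $p$ dividing the relevant resultants of the irreducible factors, there is a Hasse-type obstruction controlled by the action of Frobenius at $p$ on the roots of $s$. Raising $\lambda$ to the power $n$ replaces this Frobenius action by its $n$-th iterate, so taking $n$ to be a common multiple of the orders of the finitely many local obstructions trivializes them all simultaneously; this is the whole point of passing to stable realizations. Once the local data is consistent, the recent theorems of Bayer-Fluckiger and Taelman on characteristic polynomials of isometries of even unimodular lattices supply the global $\phi$, and the cyclotomic padding $c$ absorbs the remaining signature matching on $\Lambda_{\mathcal{K}}$ in the three cases of signature $(3,19)$, $(1,9)$, and $(3,3)$.
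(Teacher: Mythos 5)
There are two genuine gaps, both at the places where the real work of the paper happens.

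First, your step (i) — realizing $s_n(x)c(x)$ as the characteristic polynomial of an isometry of the \emph{specific integral} lattice $\Lambda_{\mathcal K}$ — is asserted rather than proved. You claim the local obstructions of the Bayer--Fluckiger--Taelman integral theory are ``controlled by the action of Frobenius at $p$ on the roots of $s$'' and are killed by taking $n$ a common multiple of their orders. Replacing $\lambda$ by $\lambda^n$ replaces $s$ by $s_n$, which changes the set of bad primes (those dividing $s_n(1)s_n(-1)\disc s_n$) as well as the local forms to be matched; there is no evident sense in which these obstructions have ``orders'' that one can clear by iterating Frobenius, and you give no argument. The paper avoids the integral problem entirely: Lemma \ref{lem:iso_QQ} produces a \emph{rational} isometry $f$ of $L\otimes\QQ$ with characteristic polynomial $s(x)(x-1)^{\rk L-d}$ (this is where conditions (1)--(2) enter, via the rational local--global theory), and the elementary Lemma \ref{lem:iso_ZZ} — sandwiching $k\,\ZZ[f]L\subseteq L\subseteq\ZZ[f]L$ and taking a power of $f$ trivial on the finite quotient — shows some power $f^n$ preserves $L$. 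That observation is the entire point of working with the stable spectrum, and your proposal misses it. (Relatedly, in the necessity direction your pairing $\zeta_n^k\leftrightarrow\zeta_n^{-k}$ only shows $s(\zeta_n^k)s(\zeta_n^{-k})>0$, which is not a rational square; the correct identity is $s_n(1)s_n(-1)=-N_{k/\QQ}\bigl((\lambda^n-\lambda^{-n})^2\bigr)$ with $\lambda^n-\lambda^{-n}=(\lambda-\lambda^{-1})u$ for some $u\in k=\QQ[\lambda+\lambda^{-1}]$, whence the square class is unchanged.)

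Second, and more seriously, your step (iii) for projective K3 surfaces does not address positivity. With the period chosen generically in $S^\perp\otimes\CC$ one gets $\Num(X)=S=\ker s_n(f)$, and Lemma \ref{lem:torelli_k3}(2) requires $f|S$ to preserve a chamber of the positive cone of $S$ cut out by its own $(-2)$-roots; a cyclic root $r\in S$ with $r+f(r)+\dots+f^i(r)=0$ obstructs \emph{every} invariant chamber (pair any candidate ample class against the orbit sum), and placing an invariant class in the cyclotomic padding cannot help, since such a class is orthogonal to every root lying inside $S$. This is exactly the difficulty the paper spends Proposition \ref{prop:preserves_chamber} and the twisting-and-regluing argument in the proof of Theorem \ref{thm:main2} on: one replaces $S$ by a twist $S(t^2)$ with $|\det S(t^2)|>4\disc s_n$, which forces the absence of obstructing roots, and then reglues it to a compatibly twisted complement inside $3U\oplus 2E_8$ by matching discriminant forms prime by prime. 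Your proposal contains no substitute for this step, so the projective case — the main content of Theorem \ref{thm:main2} — remains unproved. (The Enriques case via an equivariant K3 with a free involution commuting with a prescribed Salem isometry is also far from ``straightforward''; the paper instead uses Barth--Peters' description of $\Aut$ of a very general Enriques surface as the $2$-congruence subgroup, which only requires $f^n\equiv\id$ on $L\otimes\FF_2$.)
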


The proof proceeds as follows. All Tori (resp. K3/Enriques) are diffeomorphic. Hence, the isometry class of the lattice $H^2(X,\ZZ)$ is independent of which Torus (resp. K3/Enriques) $X$ we have chosen. It is abstractly isomorphic to some fixed lattice $L$. Given an isometry $f \in O(L)$ it is possible, using some Torelli theorem, to decide whether $f$ is in the image of the natural representation $\Aut(X) \rightarrow O(H^2(X,\ZZ))\cong O(L)$. For this to be the case $f$ has to preserve some extra linear data such as a Hodge structure or has trivial mod $2$ reduction. The most intricate case is that of \emph{projective} K3 surfaces. There $f$ has to preserve a chamber of the positive cone - corresponding to the ample cone. Since it is usually infinite sided, it is notoriously difficult to control.
For a given concrete $f$ it is now algorithmically possible to decide whether this cone is preserved \cite{mcmullen:minimum,brandhorst_alonso:minimal_salem}.
However, the algorithm can only deal with a single isometry at a time. 
In Proposition \ref{prop:preserves_chamber}, we give a sufficient condition for a chamber to be preserved. We expect that it will be useful to study the stable dynamical spectrum of supersingular K3 surfaces (as in \cite{brandhorst_alonso:minimal_salem}) and IHSM manifolds (as in \cite{amerik:automorphisms_ihsm}) as well.

\subsection*{Acknowledgements}
I thank Víctor Gonzalez-Alonso, Curtis T. McMullen and Matthias Sch\"utt for comments and discussions on an early version of this paper.
The financial support of
the research training group GRK 1463 ''Analysis, Geometry and String Theory'' is gratefully acknowledged.
\section{Preliminaries}
In this section we review the necessary material from \cite{nikulin:quadratic_forms,mcmullen:minimum} concerning the theory of lattices, their isometries, discriminant forms, gluings and twists. 

\subsection{Lattices}
A \emph{lattice} is a finitely generated free abelian group $L$ equipped with a non-degenerate integer valued bilinear form
\[\langle \cdot , \cdot \rangle \colon L \times L \rightarrow \ZZ.\]
It is called \emph{even} if $\langle x , x \rangle \in 2\ZZ$ for all $x \in L$. For brevity we sometimes write $x.y$ for $\langle x,y\rangle$ and $x^2$ for $x.x$ where $x,y \in L$. 
The \emph{dual lattice} $L^\vee$ of $L$ is given by 
\[L^\vee = \{x \in L \otimes \QQ \mid \langle x , L \rangle \subseteq \ZZ\}.\]
Let $(e_i)$ be any $\ZZ$-basis of $L$, then the \emph{determinant} of $L$ is defined as the determinant of the Gram matrix $( e_i. e_j)_{ij}$. We call $L$ \emph{unimodular} if it is of determinant $\pm 1$.
An \emph{isometry} $M\rightarrow L$ of lattices is a homomorphism of $\ZZ$-modules preserving the bilinear forms. The orthogonal group $O(L)$ consists of the self isometries of the lattice $L$.
The \emph{signature} (pair) of a lattice is denoted by $(s_+,s_-)$ where $s_+$ (respectively $s_-$) is the number of positive (respectively negative) eigenvalues of the Gram matrix. A lattice is called \emph{indefinite} if both $s_+$ and $s_-$ are non-zero and \emph{hyperbolic} if it is indefinite and $s_+=1$. We denote by $U$, resp. $E_8$, the even unimodular lattices of signature $(1,1)$, respectively $(0,8)$. Indefinite, even unimodular lattices are classified up to isometry by their signature pair. 

The \emph{discriminant group} $D_L=L^\vee/L$ has cardinality $|\det L |$. 
If $L$ is an even lattice, then its discriminant group carries the \emph{discriminant form}, given by
\[q_L \colon D_L \rightarrow \QQ/2\ZZ \quad  \langle x , x\rangle \mod 2\ZZ.\] 
If $M\subseteq L$ are lattices of the same rank, then we call $L$ an \emph{overlattice} of $M$. Even overlattices $L$ of a lattice $M$ correspond bijectively to isotropic subgroups $L/M=H\subseteq D_M$, i.e., with $q_M|H=0$. 
For a prime number $p$ we denote by $\ZZ_p$ the p-adic integers and by $\QQ_p$ the p-adic numbers.
The discriminant form (and group) has an orthogonal decomposition into its $p$-primary parts $(q_L)_p$
\[q_L = \bigoplus_p \left((q_L)_p \colon (D_L)_p \rightarrow \QQ_p/2\ZZ_p\right)\]
where $(q_L)_p$ is the discriminant form of $L\otimes \ZZ_p$ (defined analogously).

\subsection{Embeddings and gluing}
An embedding of lattices $M \rightarrow L$ is called \emph{primitive} if $L/M$ is torsion free. Let $M\rightarrow L$ be a primitive embedding into a unimodular lattice $L$ and $N=M^\perp$ the orthogonal complement. It is primitive as well. 
We get an isomorphism 
$ \phi \colon D_M \rightarrow D_N$, with $q_N(\phi(x))=-q_M(x)$, called glue map. Conversely, given such a glue map, its graph 
\[\Gamma =\{x + \phi(x) \in D_M \oplus D_N \mid x \in D_M\}\]
is isotropic with respect to the discriminant quadratic form $q_{M\oplus N}$. Hence, it defines a unimodular overlattice $L$ via $L/\left(M\oplus N \right)=\Gamma$. 
Let $f \in O(M)$, $g\in O(N)$ be isometries. 
Then $f\oplus g \in O(M\oplus N)$ extends to the overlattice $L$ if and only if $\phi \circ \bar f = \bar g \circ \phi$ where $\bar f \in O(q_M)$ and $\bar g \in O(q_N)$ are the induced actions. 

\subsection{Twists}
A pair $(L,f)$ of a lattice $L$ and an isometry $f$ of $L$ with characteristic polynomial $s(x)\in \mathbb{Z}[x]$ is called a $s(x)$-lattice. 
Given a $s(x)$-lattice $(L,f)$ and $a \in \mathbb{Z}[f+f^{-1}]$, we obtain a new symmetric bilinear form on $L$ by setting
\[\left\langle g_1 , g_2 \right \rangle_a=\left\langle ag_1 , g_2 \right \rangle.\]
The lattice $L$ equipped with this new product is called the \textit{twist} of $L$ by $a$ and is denoted by $(L(a),f)$. Note that the twist of an even lattice stays even.
Twisting may change the signature and determinant of a lattice. However, we will only twist by a square $t^2 \in \ZZ[f+f^{-1}]$. Then $L(t^2)$ is isomorphic, via $x\mapsto tx$, to the sublattice $tL$ of $L$. In particular, the signature of $L(t^2)$ and $L$ coincide.
Of particular interest is the case when the characteristic polynomial $s(x)$ of $f$ is irreducible. Then $K=\QQ[f]$ is a degree two extension of the field $k=\QQ[f+f^{-1}]$. 
\begin{lemma}\label{lem:twist_split}	
Let $(L,f)$ be a $s(x)$-lattice with $s(x)$ irreducible. 
Suppose that $t \in \ZZ[f+f^{-1}]$ is a prime of $k$ split in $K$ of norm $p$ not dividing $2 \cdot \det L \cdot \disc p(x)$. Then the twisted p(x)-lattice $L(t^n)$ has determinant $\pm p^{2n}$ and discriminant quadratic form isomorphic to
\[q_{L(t^n)}\cong \frac{1}{p^n} \left( \begin{matrix}
0 & 1\\
1 & 0
\end{matrix}
\right).\]
\end{lemma}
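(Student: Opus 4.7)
The plan is to localize at $p$ and exploit the \'etale decomposition of $K$. Unpacking $\langle x,y\rangle_{t^n}=\langle t^n x,y\rangle$ immediately gives $L(t^n)^\vee = t^{-n}L^\vee$, so the discriminant group of $L(t^n)$ equals $t^{-n}L^\vee/L$. At any prime $q\neq p$, the element $t$ is a local unit in $\mathcal{O}_k$, so the twist does nothing; hence all new discriminant data is concentrated at $p$, and in fact at the prime $\mathfrak{q}\subset\mathcal{O}_k$ generated by $t$.

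Because $p\nmid 2\cdot\disc s(x)$, the ring $\mathcal{O}_K\otimes\ZZ_p=\prod_{\mathfrak{p}\mid p}\mathcal{O}_{K_\mathfrak{p}}$ is a product of unramified local rings; since $s(x)$ is irreducible and $p\nmid\det L$, the module $L\otimes\ZZ_p$ is free of rank one over $\mathcal{O}_K\otimes\ZZ_p$. The prime $\mathfrak{q}$ has residue field $\FF_p$ and splits in $K$ as $\mathfrak{p}_1\mathfrak{p}_2$ with $K_{\mathfrak{p}_i}=\QQ_p$, so the $\mathfrak{q}$-component $L_\mathfrak{q}\cong\ZZ_p\oplus\ZZ_p$, and one may write $t=pu$ with $u\in\ZZ_p^\times$; then $t$ acts on $L_\mathfrak{q}$ as diagonal multiplication by $(pu,pu)$.

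The crucial step is to identify the bilinear form on $L_\mathfrak{q}$. Fixing a generator $v$ of $L\otimes\QQ\cong K$, every $f$-invariant symmetric $\ZZ$-bilinear form on $L\otimes\QQ$ can be written in the shape
$\langle a v,\, b v\rangle=\operatorname{Tr}_{K/\QQ}(\alpha\,\bar a\, b)$
for a unique $\alpha\in K$, where $\bar{\phantom{a}}$ denotes the non-trivial element of $\operatorname{Gal}(K/k)$; symmetry of $\langle\cdot,\cdot\rangle$ forces $\alpha\in k$, and $p$-unimodularity of $L$ at $\mathfrak{q}$ forces $\alpha\in\ZZ_p^\times$. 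Since the Galois involution swaps the two factors of the split decomposition, $\overline{(a_1,a_2)}=(a_2,a_1)$, and $\operatorname{Tr}_{k_\mathfrak{q}/\QQ_p}=\id$ because $k_\mathfrak{q}=\QQ_p$, the form on the standard $\ZZ_p$-basis $e_1=(1,0),e_2=(0,1)$ reduces to $\langle(a_1,a_2),(b_1,b_2)\rangle=\alpha(a_1 b_2+a_2 b_1)$, i.e.\ the hyperbolic plane $\bigl(\begin{smallmatrix}0 & \alpha\\ \alpha & 0\end{smallmatrix}\bigr)$.

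From here the twist is immediate: $t^{-n}L_\mathfrak{q}/L_\mathfrak{q}\cong(\ZZ/p^n\ZZ)^2$ is generated by $g_i=p^{-n}e_i$, and a direct calculation yields diagonal entries $q_{L(t^n)}(g_i)\equiv 0\pmod{2\ZZ}$ together with off-diagonal entry $\alpha u^n/p^n\pmod{\ZZ}$. Absorbing the $p$-adic unit $\alpha u^n$ by rescaling $g_1$ normalizes the form to the claimed $\tfrac{1}{p^n}\bigl(\begin{smallmatrix}0 & 1\\1 & 0\end{smallmatrix}\bigr)$; evenness is automatic from the vanishing of the diagonal. The determinant statement then drops out from $|D_{L(t^n)}|_p=p^{2n}$. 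I expect the identification $\alpha\in\ZZ_p^\times$ in step three to be the main obstacle: unpacking the trace description of $f$-invariant symmetric forms and verifying its compatibility with the local splitting at $\mathfrak{q}$ is where all three coprimality hypotheses ($p\nmid 2$, $p\nmid\det L$, $p\nmid\disc s(x)$) are used in an essential way.
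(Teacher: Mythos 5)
Your proof is correct and follows essentially the same route as the paper's: both arguments localize at $p$, use the splitness of $t$ (equivalently, the factorization of $s(x)$ over $\ZZ_p$) to split off the rank-two component on which $t$ is a uniformizer, identify that component as a unit multiple of a hyperbolic plane, and read off the discriminant form after twisting; your trace-form description $\operatorname{Tr}_{K/\QQ}(\alpha\,\bar a\,b)$ with the swap involution is just a structural repackaging of the paper's direct computation in an eigenbasis for the eigenvalues $a,a^{-1}\in\ZZ_p$. The only point to note, shared with the paper's own proof, is that the determinant is really computed only up to $p$-adic units (your $|D_{L(t^n)}|_p=p^{2n}$, the paper's ``up to units''), which matches how the lemma is actually invoked later via $|\det S(t^2)|=|\det S|\,p^{2l}$.
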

\begin{proof}
Since the statement is local in $p$, we can tensor with $\ZZ_p$.
The factorization of $t$ in $K$ corresponds to the factorization of $s(x)$ in $\ZZ_p[x]$. Since $t$ is split, the corresponding factorization is $t=(f-a)(f-a^{-1})u(f)$ for some $a \in \ZZ_p$ and $u[f]\in \left(\ZZ_p[f]\right)^\times$. 
Hence, we can split off the combined eigenspace $E=\ker (f+f^{-1}-a-a^{-1})$ of $f$ for $a$ and $a^{-1}$.
We get $L(t^n)=E(t^n) \oplus E^\perp(t^n)$.
Since $t|E^\perp$ is invertible, we get that $\det E^\perp (t^n)=\det E^\perp$ is unimodular. Hence, up to units $\det E=\det L(t^n) =\det t^{n}=N^K_\QQ(t^n)=p^{2n}$. 
Then, in an eigenbasis the Gram Matrix of $E(t^n)$ is given by 
$\left( \begin{matrix}
0 & p^n\\
p^n & 0
\end{matrix}
\right)$. The matrix representing the discriminant form is now obtained by inverting the Gram matrix. 
Since $\det u(f)$ is a unit, $E^\perp(t^n)$ is unimodular. 
\end{proof}
 
\subsection{Positivity}
Let $L$ be an even lattice. A \emph{root} of $L$ is $r \in L$ with $r^2 = -2$. We denote the set of roots by $\Delta_L$. If $L$ is hyperbolic, we set
\[V_L=\{x \in L \mid x^2 >0, r.x \neq 0 \; \forall r \in \Delta_L\}\]
which is an open set. If $L$ is negative definite, we define
\[V_L=\{x \in L \mid x^2 <0, r.x \neq 0 \; \forall r \in \Delta_L\}.\]
In both cases the connected components of $V_L$ are called the \emph{chambers} of $V_L$. An isometry $f\in O(L)$ is called \emph{positive} if it preserves a chamber.
We denote by $O^+(L)$ the subgroup stabilizing each connected component of the light cone $V^0=\{x \in L \otimes \RR \mid x\neq 0, \; x^2=0\}$. 
A dual perspective on positivity is that of obstructing roots.
An \emph{obstructive root} for $f$ is $r \in \Delta_L$ such that there is no $h \in L$ with $h^\perp$ negative definite and $ h . f^i(r) >0$ for all $i \in \ZZ$. 
We call $r$ a \emph{cyclic root} for $f$ if 
\[r + f(r) + f^2(r) + \dots + f^i(r)=0\]
for some $i>0$. Cyclic roots are obstructing. If $L$ is negative definite, then every obstructing root is cyclic.
We have the following 
\begin{theorem}\cite[2.1]{mcmullen:minimum}\label{thm:obstructing_roots}
A map $f\in O^+(L)$ is positive if and only if it has no obstructing roots. The set of obstructing roots, modulo the action of $f$, is finite.
\end{theorem}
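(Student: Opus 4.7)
The statement splits into two implications plus a finiteness claim, and the argument is only substantive in the hyperbolic case.

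The forward direction is direct from the definitions. If $f$ preserves a chamber $C$ of $V_L$ and $h \in C$, then $f^i(h) \in C$ for every $i \in \ZZ$, so for each root $r$ the number $h \cdot f^{-i}(r) = f^i(h) \cdot r$ has a sign independent of $i$, because the wall $r^\perp$ does not meet $C$. Replacing $r$ by $-r$ if necessary gives $h \cdot f^i(r) > 0$ for all $i$; and $h^\perp$ is negative definite (since $h^2 > 0$ in the hyperbolic case, and automatically in the negative definite case), so $h$ witnesses the non-obstruction of $r$.

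For the reverse direction the plan is to bound the set of obstructing roots modulo $\langle f \rangle$ first, and then to use that bound to construct a preserved chamber. The finiteness claim is trivial in the negative definite case, because $L$ has only finitely many roots. In the hyperbolic case I would use the $f$-invariant spectral decomposition $L \otimes \RR = P \oplus W$, where $P$ is the plane carrying the expanding eigenvalue pair $\lambda, \lambda^{-1}$ (assuming $f$ has infinite order, otherwise the argument collapses to a finite one) and $W$ is a negative definite $f$-invariant complement on which $f$ acts by bounded isometries. An obstructing root $r$ must have bounded projection to $P$: otherwise the iterates $f^i(r)$ align asymptotically with the expanding eigenray, they pair positively with any element of the positive cone for $|i| \gg 0$, and a bounded correction to a candidate witness $h$ handles the finitely many small $|i|$ --- contradicting obstruction. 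Discreteness of $L$ then limits the $\langle f \rangle$-orbits of such roots to a finite number.

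Given finiteness, the construction step is a convex-feasibility argument. For each of the finitely many $\langle f \rangle$-orbits of non-obstructing roots, the hypothesis gives an open convex cone of valid witnesses inside $V_L$; intersecting these finitely many cones with the positive cone yields, by a straightforward openness argument, a nonempty set. Any $h$ in it lies in a chamber $C_0$ for which $h \cdot f^i(r)$ has constant sign in $i$ for every root $r$, so $f(C_0) = C_0$. The hard part will be the finiteness statement: both $\Delta_L$ and the chamber tiling of $V_L$ are typically infinite, and it is the spectral asymmetry of $f$ --- its Salem or Pisot factor --- that confines potentially obstructing walls to a compact slab around $W$. Once that confinement is in hand, the remaining step is a routine convex-geometric problem on a finite set of linear inequalities.
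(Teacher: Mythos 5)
This theorem is quoted from McMullen and the paper contains no proof of it, so there is nothing in-paper to compare against directly; the correct mechanism is, however, encoded in the remark following the theorem and in the computation of Proposition~\ref{prop:preserves_chamber}, and your sketch should be measured against those. Your forward direction is fine. The reverse direction has a genuine gap: you propose to intersect the witness cones attached to ``the finitely many $\langle f\rangle$-orbits of \emph{non-obstructing} roots,'' but the finiteness claim is for the \emph{obstructing} roots. When $f$ is positive every root is non-obstructing, and a hyperbolic lattice such as $U\oplus E_8$ has infinitely many roots lying in infinitely many $f$-orbits, so the premise of a finite convex-feasibility problem fails; an infinite intersection of open cones is not nonempty ``by a straightforward openness argument.'' The construction has to go through the axis: the invariant chamber is the one whose closure contains the intersection of $\gamma$ (the plane spanned by the $\lambda^{\pm 1}$-eigenvectors) with the positive cone, and its existence is exactly what the absence of walls crossing $\gamma$ --- equivalently, of obstructing roots --- guarantees.

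Your finiteness argument is also misstated at the decisive point. Write $r=x_1u_1+x_2u_2+w$ with $f(u_1)=\lambda u_1$, $f(u_2)=\lambda^{-1}u_2$, $u_1.u_2=1$ and $w$ in the negative definite $f$-invariant complement. A root with $x_1>0>x_2$ does \emph{not} ``pair positively with any element of the positive cone for $|i|\gg 0$'': for $h$ in the positive cone one has $h.f^i(r)\to+\infty$ as $i\to+\infty$ but $h.f^i(r)\to-\infty$ as $i\to-\infty$. These are precisely the obstructing non-cyclic roots; the relevant condition is the sign $x_1x_2<0$ (the wall $r^\perp$ separates the two ends of the axis), not the magnitude of the projection to $P$. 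Once this is isolated, finiteness is immediate: $r^2=-2$ together with $w^2\le 0$ confines $x_1x_2$ to a bounded negative interval and bounds $|w^2|$, so the obstructing roots lie in a compact set modulo the action of $f$, which contains only finitely many lattice points --- this is exactly the set $K$ appearing in the proof of Proposition~\ref{prop:preserves_chamber}. Finally, your spectral decomposition presupposes an expanding eigenvalue pair; an infinite-order $f\in O^+(L)$ with spectral radius $1$ (the parabolic case) is not covered by your dichotomy ``finite order or Salem factor,'' whereas the theorem as stated makes no such restriction.
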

Let $L$ be hyperbolic and $f\in O(L)$ with spectral radius a Salem number $\lambda$. Denote by $\gamma$ the real plane spanned by the eigenspaces for $\lambda$ and $\lambda^{-1}$. Then the obstructing roots for $f$ are the cyclic roots together with the roots $r$ such that $r^\perp \cap \gamma$ is positive definite. To see this, note that the closure of any $f$-invariant chamber contains the intersection of $\gamma$ with the positive cone.

\section{Surfaces and their automorphisms}
Let $X$ be a either a $2$-Torus, a K3 surface or an Enriques surface.
Its second singular cohomology group modulo torsion $H^2(X,\ZZ)/tors$ equipped with the cup product is a unimodular lattice isomorphic to 
\[H^2(X,\ZZ)/tors \cong \begin{cases}
3U & \mbox{for } $X$ \mbox{ a 2-Torus} \\
U \oplus E_8 & \mbox{for } $X$ \mbox{ an Enriques surface} \\
3U \oplus 2 E_8 &\mbox{for } $X$ \mbox{ a K3 surface.}  
\end{cases}
\]
It admits a Hodge decomposition
\[H^2(X,\mathbb{Z})\otimes \mathbb{C} \cong H^2(X,\mathbb{C})=H^{2,0}(X) \oplus H^{1,1}(X) \oplus H^{0,2}(X)\]
where $H^{i,j}(X)\cong H^j(X,\Omega_X^i)$, $H^{i,j}(X)=\overline{H^{j,i}(X)}$ and $H^{1,1}(X)=(H^{2,0}(X) \oplus H^{0,2}(X))^\perp$ is hyperbolic.
By Lefschetz' Theorem on $(1,1)$ classes we can recover the numerical divisor classes from the Hodge structure as 
\[\Num(X)= H^{1,1}(X)\cap H^2(X,\mathbb{Z})/tors.\]
We note that a (compact) Kähler surface $X$ is projective if and only if there is a divisor of positive square, i.e. $\Num(X)$ has signature $(1,\rk \Num(X)-1)$. 
The transcendental lattice is defined as the smallest primitive sublattice $T\subseteq H^2(X,\mathbb{Z})$ whose complexification contains $H^{2,0} \subseteq T\otimes \mathbb{C}$. Since in our case $h^{2,0}\in \{0,1\}$, the Hodge structure on $T$ is irreducible.

Let $F\in \Aut(X)$ be an automorphism with dynamical degree $\lambda>1$. We call the minimal polynomial $s(x)\in \ZZ[x]$ of $\lambda$ a \emph{Salem polynomial}. Then $F^*$ is semisimple with characteristic polynomial $s(x)c(x)$ where $c(x)$ is a product of cyclotomic polynomials \cite[Thm. 3.2]{mcmullen:siegel_disk}.
Set $S=\ker s(F^*)\subseteq H^2(X,\ZZ)/tors$.
By irreducibility of $T(X)$, the minimal polynomial of $F^*|T(X)$ must be irreducible in $\QQ[x]$ too. Hence, either  $T(X)= S$  or $S\subseteq \Num(X)$.
Since the real eigenvectors for $\lambda$ and $\lambda^{-1}$ span a hyperbolic plane, the signature of $S$ is either  $(1,\deg s(x)-1)$ if $S\subseteq \Num(X)$ or in case $S=T(X)$ it is $(3,\deg s(x)-3)$. In the first case $X$ is projective and in the second not.\\

In the following three Lemmas we collect criteria for an isometry of the cohomology lattice to come from an automorphism of a surfaces.

\begin{lemma}\cite{barth:automorphism_enriques}\label{lem:torelli_enriques}
The automorphism group of a very general Enriques surface $X$ is the $2$-congruence subgroup given by the kernel of 
\[O^+(H^2(X,\ZZ))\rightarrow O(H^2(X,\ZZ)\otimes \FF_2).\]
It is of finite index in the orthogonal group of $H^2(X,\ZZ)/tors\cong U\oplus E_8$.
\end{lemma}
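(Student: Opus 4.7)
The plan is to deduce this from the Torelli theorem for K3 surfaces, applied to the universal double cover $\pi\colon Y\to X$ with covering involution $\iota$. Let $M^\pm\subset H^2(Y,\ZZ)$ denote the $\pm 1$-eigenlattices of $\iota^*$; standard computations give $M^+\cong U(2)\oplus E_8(2)$ and $M^-\cong U\oplus U(2)\oplus E_8(2)$, and $\pi^*$ identifies $H^2(X,\ZZ)/tors$ with $M^+$ as a $\ZZ$-module while doubling the bilinear form. Under this identification there is a canonical isomorphism
\[D_{M^+}\cong M^+/2M^+\cong (H^2(X,\ZZ)/tors)\otimes \FF_2,\]
and the action of any $\varphi\in O(H^2(X,\ZZ)/tors)$ on $D_{M^+}$ is the mod $2$ reduction of $\varphi$.

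First I would show that $\varphi\in O^+(H^2(X,\ZZ))$ lifts to an isometry $\tilde\varphi$ of $H^2(Y,\ZZ)$ commuting with $\iota^*$ if and only if $\varphi\equiv\id\pmod 2$. By the gluing formalism of the preliminaries, $\varphi\oplus\epsilon\cdot\id$ on $M^+\oplus M^-$ (with sign $\epsilon=\pm 1$) extends across the unimodular overlattice $H^2(Y,\ZZ)$ precisely when the induced actions agree on the glue; since $\pm\id$ acts trivially on $D_{M^-}$, this forces $\bar\varphi=\id$ on $D_{M^+}$.

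For very general $X$, the Picard lattice of $Y$ equals $M^+$, the transcendental lattice equals $M^-$, and outside a countable union of proper closed subsets of the Enriques moduli space the only Hodge isometries of $M^-$ are $\pm\id$. Choosing $\epsilon$ so that $\tilde\varphi$ preserves the period line of $Y$, the lift becomes a Hodge isometry of $H^2(Y,\ZZ)$. Since every element of $M^+\cong U(2)\oplus E_8(2)$ has square in $4\ZZ$, the lattice $\Pic(Y)=M^+$ contains no $(-2)$-classes, so the ample cone of $Y$ coincides with a connected component of the positive cone. The condition $\varphi\in O^+$ then guarantees that $\tilde\varphi$ preserves this component. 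The strong Torelli theorem for K3 surfaces produces a unique $\tilde F\in\Aut(Y)$ inducing $\tilde\varphi$; as $\tilde F$ commutes with $\iota$ it descends to an automorphism $F\in\Aut(X)$ inducing $\varphi$.

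The converse follows by reversing the argument: any $F\in\Aut(X)$ lifts to $Y$, commutes with $\iota$, acts on $M^+$, and induces the identity on $D_{M^+}$, hence lies in the mod $2$ congruence subgroup. Finite index is immediate: $O^+(U\oplus E_8)$ has index at most $2$ in $O(U\oplus E_8)$, and the mod $2$ reduction map lands in the finite group $O((U\oplus E_8)\otimes\FF_2)$. The step I expect to be the main obstacle — preservation of the ample cone under the lifted isometry — collapses in the very general case because $M^+$ is root-free; without this observation one would have to control the obstructing roots of $\tilde\varphi$ via Theorem \ref{thm:obstructing_roots}.
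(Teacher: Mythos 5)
The paper does not actually prove this lemma; it is quoted verbatim from the literature (Barth--Peters), so there is no internal proof to compare against. Your reconstruction via the K3 cover is the standard argument from that source and is essentially sound: the identifications $M^+\cong U(2)\oplus E_8(2)$, $M^-\cong U\oplus U(2)\oplus E_8(2)$, $D_{M^+}\cong M^+/2M^+$, the gluing criterion, the genericity statement for Hodge isometries of $M^-$, and the observation that $M^+$ is root-free (so the ample cone of $Y$ is a full component of the positive cone) are all correct and are exactly the ingredients of the known proof. Two points deserve tightening. First, your opening claim that $\varphi$ lifts to an isometry of $H^2(Y,\ZZ)$ commuting with $\iota^*$ \emph{only if} $\varphi\equiv\id\bmod 2$ is false as stated: a lift commuting with $\iota^*$ need only restrict to \emph{some} $g\in O(M^-)$ with $\bar g$ matching $\bar\varphi$ under the glue map, and $O(M^-)\to O(q_{M^-})$ is surjective here, so nontrivial $\bar\varphi$ can be glued. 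What forces $g=\pm\id$ (hence $\bar\varphi=\id$) is the requirement that the lift be a Hodge isometry together with the genericity of the period; you invoke this correctly in the converse direction, but the first paragraph should be phrased accordingly. Second, the lemma asserts that $\Aut(X)$ \emph{is} the congruence subgroup, which also requires injectivity of $\Aut(X)\to O(H^2(X,\ZZ)/tors)$; this follows in the very general case because a numerically trivial $F$ lifts to $\tilde F$ with $\tilde F^*\in\{\id,\iota^*\}$, whence $\tilde F\in\{\id,\iota\}$ by the strong Torelli theorem, but you do not address it. With these repairs your argument is a complete and faithful rendering of the cited result.
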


\begin{lemma}\label{lem:torelli_tori}
Let $s(x)$ be a Salem polynomial of degree $d$ and
$f \in O(3U)$ an isometry with characteristic polynomial $s(x)(x-1)^{6-d}$ which acts trivially on $3U \otimes \FF_2$.
Then one can find a complex $2$-torus $T$ with $3U=H^2(T,\ZZ)$ and $F \in \Aut(T)$ such that $F^* =f$. The $2$-torus $T$ is projective if and only if $S=\ker s(f)$ has signature $(1,d-1)$.
\end{lemma}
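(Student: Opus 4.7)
The plan is to realize $3U \cong \wedge^2\Lambda$ for a free abelian group $\Lambda \cong \ZZ^4$ (with cup product given by the natural pairing $\wedge^2\Lambda \times \wedge^2\Lambda \to \wedge^4\Lambda \cong \ZZ$), to lift $f$ to $g \in GL(\Lambda)$ with $\wedge^2 g = f$, and then to invoke the Torelli theorem for complex $2$-tori: a $g$-invariant Hodge structure on $\Lambda$, that is, a complex subspace $H^{1,0}\subset \Lambda_\CC$ with $\Lambda_\CC = H^{1,0}\oplus \overline{H^{1,0}}$, produces a complex $2$-torus $T$ with $H^1(T,\ZZ) = \Lambda$, $H^2(T,\ZZ) = 3U$, and an automorphism $F$ satisfying $F^*|_{H^1} = g$ and hence $F^*|_{H^2} = \wedge^2 g = f$.

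The first and main step is the integral lift. The map $\wedge^2 \colon GL(\Lambda) \to O(3U)$ is the integral form of the exceptional isogeny $SL_4 \to SO(3,3)$, so a generic $f \in O(3U)$ does not lie in its image. The mod-$2$ hypothesis on $f$ is exactly what removes the obstruction: the reduction $\wedge^2\colon GL_4(\FF_2) \to O^+(3U\otimes\FF_2) \cong S_8$ has image of index $2$, realising the exceptional isomorphism with the alternating group $GL_4(\FF_2) \cong A_8$, and since $f \equiv \Id \pmod 2$ trivially lifts to $\Id \in GL_4(\FF_2)$ the obstruction at $2$ vanishes. Standard arguments controlling the integral isogeny $SL_4 \to SO(3,3)$, whose kernel $\mu_2$ is étale away from $2$, then produce the desired lift $g \in GL(\Lambda)$ with $\wedge^2 g = f$.

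With $g$ in hand, I would choose a $g$-invariant Hodge structure on $\Lambda$. The four eigenvalues of $g$ on $\Lambda_\CC$ come in complex conjugate pairs, and their pairwise products must reproduce the spectrum of $f = \wedge^2 g$, namely $\lambda$, $\lambda^{-1}$, the Galois conjugates of $\lambda$ on the unit circle, and $1$ with multiplicity $6-d$ — a purely combinatorial constraint that is easy to arrange. Picking one eigenvector of $g$ from each complex conjugate pair then determines the complex subspace $H^{1,0}\subset \Lambda_\CC$ and with it the torus $T$ and automorphism $F$.

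For the projectivity criterion, the Salem structure of $s(x)$ forces the signature of $S = \ker s(f)$ to be of the form $(2k+1, d-1-2k)$, and the inclusion into $3U$ of signature $(3,3)$ restricts this to $(1,d-1)$ or $(3,d-3)$. The Hodge plane $(H^{2,0}\oplus H^{0,2}) \cap 3U_\RR$ is a $2$-dimensional positive-definite $f$-invariant real subspace, so it is spanned by eigenvectors of $f$ and sits either inside $S_\RR$ or inside the $(x-1)$-eigenspace. These two alternatives correspond respectively to $S = T(T)$ of signature $(3,d-3)$ with $T$ non-projective, and to $S\subseteq \Num(T)$ of signature $(1,d-1)$ with $T$ projective, and which one occurs is dictated by the signature of $S$, which is intrinsic to $f$. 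Thus $T$ is projective exactly when $S$ has signature $(1,d-1)$. I expect the main obstacle to be the integral lifting in the first step, where the trivial mod-$2$ action of $f$ is indispensable.
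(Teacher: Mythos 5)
Your overall strategy --- lift $f$ through $\bigwedge^2\colon SL(\Lambda)\to O(\bigwedge^2\Lambda)$ and then build a $g$-invariant weight-one Hodge structure --- is a legitimate alternative to the paper's argument, which goes the other way around: it first produces the weight-one Hodge structure directly from an eigenvector $\eta$ of $f$ with $\eta^2=0$ and $\eta\cdot\bar\eta>0$ via the Plücker embedding, and only then invokes the Torelli theorem for complex $2$-tori \cite[V (3.2)]{BHPV:compact_complex_surfaces} to obtain $F$ with $F^*=\pm f$, fixing the sign by a positive-cone argument. The problem is that your first step is exactly the nontrivial content of that Torelli theorem, and you do not prove it: ``standard arguments controlling the integral isogeny $SL_4\to SO(3,3)$'' is an appeal to a result you would have to state and justify. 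The mod $2$ computation ($GL_4(\FF_2)\cong A_8$ inside $O_6^+(2)\cong S_8$) only shows that the obstruction visible over $\FF_2$ vanishes; it does not by itself produce an integral lift, and the correct integral statement carries a sign ambiguity: at most one of $\pm f$ lies in the image of $\bigwedge^2$ (for instance $-\Id_{3U}\equiv\Id\bmod 2$, yet it reverses orientation on maximal positive-definite subspaces and so is not of the form $\bigwedge^2g$, matching the fact that no torus automorphism acts as $-\Id$ on $H^2$). You never address which of $\pm f$ you are lifting, whereas the paper must and does rule out the wrong sign.

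There is a second, smaller gap in the Hodge-structure step. You need a $g$-invariant $2$-plane $H^{1,0}$ with $H^{1,0}\cap\overline{H^{1,0}}=0$; if $g$ had real eigenvalues this can fail outright, since every $g$-invariant plane would then be defined over $\RR$. Moreover the eigenvalue configuration of $g$ is not something you get to ``arrange'': it is forced by $f$, because the multiset of pairwise products of the $\mu_i$ must equal the spectrum of $f$ and $\prod_i\mu_i=\det g=1$. One can check that the Salem structure forces all $\mu_i$ to be non-real (up to relabelling one finds relations such as $\mu_1^2=\lambda\alpha_1\alpha_2$, and $\alpha_1\alpha_2=\pm1$ would force $s(x)=\pm s(-x)$, contradicting that a Salem polynomial has a single root outside the closed unit disc), but this needs to be said. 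Finally, the choice of which pair of eigenvectors spans $H^{1,0}$ is constrained by the requirement $\eta\cdot\bar\eta>0$ for $\eta$ spanning $\bigwedge^2H^{1,0}$, and it is precisely this constraint that ties the construction to the signature of $S$ and hence to the projectivity dichotomy; your last paragraph gestures at this correctly but does not close the loop.
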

\begin{proof}
Let $L$ be a free $\ZZ$ module of rank $4$. An orientation on $L$ is an isomorphism $\det\colon \bigwedge^4 L \xrightarrow{\sim} \ZZ$. It gives rise to an even unimodular lattice which we identify with $3U$.
We can choose an eigenvector $\eta \in 3U \otimes \CC$ of $f \otimes \CC$ such that $\eta^2=0$ and $\eta.\bar \eta>0$. Since $\CC \eta$ is isotropic, it is in the image of the Plücker embedding $Gr(2,L\otimes \CC) \rightarrow \bigwedge^2 L \cong 3U$. Hence, we get a complex $2$-plane $S$ with $\bigwedge^2S=\CC \eta$ and $S \oplus \bar S=L\otimes \CC$. This defines a weight one Hodge structure on $L$, i.e., a complex $2$-torus $T$.
We can view $f$ as an isometry of $H^2(T,\ZZ)=\bigwedge^2 L\cong 3U$ which,
by construction, preserves the Hodge structure on $H^2(T,\ZZ)$. Since $f\otimes \FF_2$ is the identity, we can apply, \cite[V (3.2)]{BHPV:compact_complex_surfaces} to get an automorphism $F$ of $T$ with $F^*=\pm f$. However, both $F^*$ and $f$ stabilize each connected component of the positive cone of $H^{1,1}(T)$. Hence, they are equal. 
\end{proof}

\begin{lemma}\label{lem:torelli_k3}
Let $f \in O(3U\oplus 2E_8)$ be an isometry with characteristic polynomial $s(x)(x-1)^{22-d}$.Then one can find a K3 surface $X$, $F \in \Aut(X)$ and an isometry $\phi:3U\oplus 2E_8 \rightarrow H^2(X,\ZZ)$ such that $F^* =\phi \circ f \circ \phi^{-1}$ if and only if
\begin{enumerate}
	\item $S=\ker s(f)$ has signature $(3,d-3)$ or
	\item $S$ has signature $(1,d-1)$ and $f|S$ is positive.	
\end{enumerate}
In case (2) $X$ is projective and in case (1) not.
\end{lemma}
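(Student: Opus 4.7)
The plan is to combine the surjectivity of the period map for K3 surfaces with the strong Torelli theorem, which states that a Hodge isometry of $H^2(X, \ZZ)$ is induced by an automorphism if and only if it maps the Kähler cone into itself.

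\emph{Necessity.} If $F \in \Aut(X)$ satisfies $F^* = \phi f \phi^{-1}$, then $F^*$ is a Hodge isometry and $H^{2,0}(X)$ is an $F^*$-eigenline with some eigenvalue $\alpha$, which must be a root of $s(x)(x-1)^{22-d}$. By irreducibility of $s(x)$ and of the Hodge structure on $T(X)$, either $\alpha$ is a root of $s(x)$, forcing $\phi(S) = T(X)$, or $\alpha = 1$, forcing $\phi(S) \subseteq \NS(X)$. Decomposing $f|_S$ into the real hyperbolic $(1,1)$-plane spanned by the eigenvectors for $\lambda, \lambda^{-1}$ together with the definite real $2$-planes from each unit-circle conjugate pair, the two cases yield signatures $(3, d-3)$ and $(1, d-1)$ respectively; in the latter, $F^*$ preserves the ample chamber of $\NS(X)$, so $f|_S$ preserves a chamber of $V_S$, i.e.\ is positive.

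\emph{Sufficiency, Case (1).} The signature $(3, d-3)$ on $S$ forces exactly one conjugate pair $\{\alpha, \bar\alpha\}$ of unit-circle eigenvalues of $f|_S$ to span a positive definite real $2$-plane. Pick an $\alpha$-eigenvector $\eta \in S_\CC$ normalized so that $\eta \cdot \bar\eta > 0$ (automatically $\eta^2 = 0$ since $\alpha \neq \pm 1$) and declare $H^{2,0}_L := \CC\eta$; this defines an $f$-invariant weight-$2$ Hodge structure on $L$. Surjectivity of the period map produces a K3 surface $X$ and a marking $\phi \colon L \to H^2(X, \ZZ)$ realizing this Hodge structure, and since $\phi(S^\perp) = \NS(X)$ is negative definite, $X$ is non-projective. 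Using semisimplicity of $f$ (as will hold when $f$ comes from a Salem automorphism), $f|_{S^\perp} = \id$, so $\phi f \phi^{-1}$ fixes every $(-2)$-class $r \in \NS(X)$ and hence preserves each half-space $\{x \cdot r > 0\}$; in particular it preserves the Kähler chamber, and strong Torelli produces $F$.

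\emph{Sufficiency, Case (2).} Set $T := S^\perp$, of signature $(2, 20-d)$. Since $f|_T = \id$, any isotropic $\eta \in T_\CC$ with $\eta \cdot \bar\eta > 0$ is $f$-fixed; taking $\eta$ generic defines an $f$-invariant Hodge structure on $L$ with transcendental lattice $T$, and the period map yields a projective K3 surface $X$ with a marking $\phi_0$ satisfying $\phi_0(S) = \NS(X)$. Let $C \subseteq V_S$ be an $f|_S$-invariant chamber (existing by positivity) and $C_X \subseteq V_S$ the chamber corresponding under $\phi_0$ to the ample cone of $X$. The key observation is that $S \perp T \ni \eta$, so every reflection in a root $r \in S$ fixes $\eta$ and is a Hodge isometry of $L$; hence $W(S)$ acts by Hodge isometries, and by transitivity on chambers one finds $w \in W(S)$ with $w(C) = C_X$. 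Setting $\phi := \phi_0 \circ w$, the ample chamber of $X$ corresponds under $\phi$ to $C$, so $\phi f \phi^{-1}$ preserves it, and strong Torelli produces the required $F$.

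The main obstacle in both cases is establishing effectivity of the constructed Hodge isometry $\phi f \phi^{-1}$. In Case (1) this is afforded by the trivial action of $f$ on $\NS(X)$ coming from semisimplicity of $f$; in Case (2) by the ability to absorb a Weyl reflection into the marking, which is permissible precisely because the roots of $S$ are orthogonal to the transcendental lattice $T$ carrying the period and so act trivially on the Hodge structure.
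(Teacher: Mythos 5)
Your argument is correct in substance and follows the same skeleton as the paper's proof: the periods you choose (a positive eigenvector in $S\otimes\CC$ in case (1), a very general line in $S^\perp\otimes\CC$ in case (2)) are exactly the paper's choices. The difference is that the paper verifies the hypotheses of McMullen's synthesis theorem \cite[Thm.~6.1]{mcmullen:minimum} and cites it, whereas you open that black box and redo the synthesis from surjectivity of the period map, the strong Torelli theorem, and transitivity of the Weyl group $W(S)$ on the chambers of $V_S$. Your key device --- a reflection in a root of $S$ fixes the period, which lies in $S^\perp\otimes\CC$, and can therefore be absorbed into the marking so that the $f$-invariant chamber becomes the ample cone --- is precisely the mechanism inside McMullen's theorem, so nothing is lost; your version buys self-containedness at the cost of length.

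Two caveats. First, in case (1) your justification of $f|_{S^\perp}=\id$ via ``semisimplicity of $f$, as will hold when $f$ comes from a Salem automorphism'' is circular in the sufficiency direction: there $f$ is merely a lattice isometry and the automorphism is what you are constructing. The correct (and easy) reason is that $S^\perp$ has signature $(0,22-d)$, so $O(S^\perp)$ is finite and the unipotent isometry $f|_{S^\perp}$ (characteristic polynomial $(x-1)^{22-d}$) must be the identity. Second, in case (2) the complement $S^\perp$ has signature $(2,20-d)$, and a unipotent isometry of an indefinite lattice need not be trivial; so ``$f|_T=\id$'' is a genuine additional assumption there, without which no $f$-invariant very general period exists and $f$, being non-semisimple, cannot be realized at all. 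This gap is shared with the paper's own proof, which likewise takes a very general line in $S^\perp\otimes\CC$ without comment; it is harmless in the paper's applications, where $f$ is always glued from $f|_S$ and the identity on $S^\perp$.
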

\begin{proof}
The lemma follows once we check the conditions of \cite[6.1]{mcmullen:minimum}.
If the signature of $S$ is $(3,d-3)$, then we take as period an eigenvector $\eta\in S\otimes \CC$ of $f$ with $\eta.\bar{\eta}>0$.
Since $f$ is the identity on $S^\perp$ there are no cyclic roots, and $f|S^\perp$ is positive.  
If the signature of $S$ is $(1,d-1)$, then we take as period a very general line in $S^\perp \otimes \CC$.
\end{proof}

\section{Proof of Theorem \ref{thm:main}}
In order to prove the main Theorem \ref{thm:main}, we need to produce isometries of certain lattices with given spectral radius. In general this can be difficult. Hence, we simplify the problem by asking for \emph{rational} isometries first. Indeed, here the answer is known as is displayed by the following Lemma \ref{lem:iso_QQ}. We postpone its proof till the end of this paper.
\begin{lemma}\label{lem:iso_QQ}
 Let $L \in \{3U, U\oplus E_8, 3U\oplus 2E_8\}$ and $s(x)$ be a Salem polynomial of degree $d$.
 Then there exists a rational isometry $f\in O(L\otimes \QQ)$ with characteristic polynomial $\det (xId-f)=s(x)(x-1)^{\rk L-d}$
 if and only if either 
 \begin{enumerate}
 	\item $d\leq \rk L-2$ or
 	\item $d=\rk L$ and $-s(1)s(-1)$ is a square.
 \end{enumerate} 
 In case $(1)$ we can find $f$ such that $\ker s(f)$ is hyperbolic.
 If the signature of $L$ is $(3,\rk L-3)$, then we can find $f$ such that $\ker s(f)$ has signature $(3,d-3)$.
\end{lemma}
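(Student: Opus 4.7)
The plan is to parametrize the pairs $(V_s,f|_{V_s})$ arising as $s(x)$-lattices over $\QQ$ via the classification of Hermitian forms over the CM extension $K/k$, and then to apply Hasse--Minkowski to decide when such a $V_s$ embeds as an orthogonal summand of $L\otimes\QQ$ with complement fixed pointwise by $f$.

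Since $s(x)$ is irreducible and reciprocal, $K=\QQ[x]/(s(x))$ is a CM field of degree $d$ with totally real subfield $k=\QQ(f+f^{-1})$, and the involution $f\mapsto f^{-1}$ on $K$ corresponds to complex conjugation. A standard Milnor-type classification identifies any rational $s(x)$-lattice with the Hermitian line $K$ equipped with the form $\langle x,y\rangle_\alpha=\mathrm{Tr}_{K/\QQ}(\alpha x\bar{y})$ for some $\alpha\in k^\times$, unique modulo $N_{K/k}(K^\times)$. A direct computation in the $k$-basis $\{1,\,2f-(f+f^{-1})\}$ of $K$, combined with the identity $N_{k/\QQ}((f+f^{-1})^2-4)=(-1)^{d/2}s(1)s(-1)$, yields
\[\det V_s(\alpha)\equiv s(1)s(-1)\pmod{(\QQ^\times)^2},\]
independent of $\alpha$. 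The signature of $V_s(\alpha)$ is $(1+2a,\,1+2b)$ with $a+b=(d-2)/2$, controlled freely by the signs of $\alpha$ at the $(d-2)/2$ real places of $k$ not extending to real places of $K$; hence any pair of odd positive integers summing to $d$ is realizable, in particular $(1,d-1)$ always and $(3,d-3)$ whenever $d\geq 4$.

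For necessity, a rational isometry of $L\otimes\QQ$ with the prescribed characteristic polynomial gives an orthogonal decomposition $L\otimes\QQ=V_s\oplus V_1$ with $V_1$ an $f$-trivial space of rank $\rk L-d$. Since $d$ and $\rk L$ are both even, $\rk L-d$ is even, so either $d=\rk L$ or $d\leq\rk L-2$. In the case $d=\rk L$, matching determinants and noting $\det L=-1$ in all three cases yields $-s(1)s(-1)\in(\QQ^\times)^2$, which is condition (2).

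For sufficiency I would pick $\alpha\in k^\times$ realizing $V_s(\alpha)$ of the appropriate signature: hyperbolic $(1,d-1)$ in case (1), and signature $(3,d-3)$ when $L$ has signature $(3,\rk L-3)$. The existence of the decomposition $L\otimes\QQ\cong V_s\oplus V_1$ is equivalent, via Hasse--Minkowski, to matching local invariants at every place: signatures match by construction, determinants either match (case (2), by condition (2)) or are absorbed by the rank-$\geq 2$ complement (case (1)), and the remaining local Hasse--Witt invariants are adjusted by varying $\alpha$ in its class modulo $N_{K/k}(K^\times)$. The hardest step is this last one: showing that the image of $k^\times/N_{K/k}(K^\times)$ in the product of local Brauer $2$-torsion groups is large enough to match the Hasse invariants of $L\otimes\QQ_p$ at all bad primes $p\mid 2\disc(s)\det L$. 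This is controlled by Hasse's norm theorem for $K/k$ together with the global product formula; at primes of good reduction the matching is automatic since both forms are then determined by rank and determinant.
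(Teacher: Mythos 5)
Your route is genuinely different from the paper's: instead of invoking Bayer--Fluckiger's general classification of quadratic forms with a prescribed module (the paper cites \cite[Cor.~9.2, Prop.~11.9, Thm.~10.8]{bayer-fluckiger:rationa_isometries} and only reworks the proof to control the signature), you construct the Salem block directly as a transfer form $\mathrm{Tr}_{K/\QQ}(\alpha x\bar y)$ and try to glue it to a trivial complement by Hasse--Minkowski. The pieces you actually compute are correct: the Milnor correspondence with rank-one Hermitian forms over $K/k$, the determinant $\det V_s(\alpha)\equiv s(1)s(-1)$ (your norm identity checks out), the signature $(1+2a,1+2b)$ governed by the signs of $\alpha$ at the $(d-2)/2$ real places of $k$ that become complex in $K$, and the necessity direction via parity of $\rk L-d$ and determinant matching against $\det L=-1$. (Minor point: $K$ is not a CM field --- it has two real places over the place of $k$ where $\lambda+\lambda^{-1}>2$; your signature count already reflects this correctly, so nothing breaks.)

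The gap is exactly where you place it, and it is not closed by ``Hasse's norm theorem plus the product formula'' as stated. Two things go wrong with ``adjusting the remaining local Hasse--Witt invariants by varying $\alpha$.'' First, the map $k^\times/N_{K/k}(K^\times)\to\prod_v k_v^\times/N_{K_w/k_v}(K_w^\times)$ is not surjective: by class field theory its image is the index-two kernel of the sum-of-local-invariants map, so you may only realize collections of local Hermitian forms satisfying a global parity constraint. You must therefore verify that the collection forced on you by $L\otimes\QQ_p$ has an even number of ``bad'' places --- this is precisely the condition $|T_i(C)|$ even in Theorem~\ref{thm:existence_rat_module}, and the paper arranges it by flipping Hasse invariants at \emph{two} places simultaneously (one finite place $p$ supplied by Chebotarev and the archimedean place). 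Second, at a prime $p$ where every place of $k$ above $p$ splits in $K$, the local transfer form is hyperbolic regardless of $\alpha$, so its Hasse invariant cannot be adjusted at all; the discrepancy must then be absorbed by the complement $V_1$, which is itself constrained when $\rk V_1=2$ (a binary form of prescribed determinant does not realize both local Hasse invariants at every place). These local realizability conditions are what the set $\mathcal{F}_{M,q}$ and the sets $\Omega(M_i,d_i)$ encode. Your outline would become a complete proof only after (i) determining at which places each local Hermitian form is rigid, (ii) checking the global parity, and (iii) handling the rank-two complement case --- at which point you have essentially re-proved the special case of \cite[Thm.~10.8]{bayer-fluckiger:rationa_isometries} that the paper cites.
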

\noindent
Typically, a rational isometry $f\in O(L\otimes \QQ)$ does not preserve $L$.
Since we are only considering the \emph{stable} dynamical spectrum, we may replace $f$ by some power $f^n$.

\begin{lemma}\label{lem:iso_ZZ}
	Let $L$ be a lattice and $f\in O(L\otimes \QQ)$ a rational isometry with \[\det(xId-f) \in \ZZ[x].\] 
	Then one can find $n\in \NN$ such that $f^n\in O(L)$. 
\end{lemma}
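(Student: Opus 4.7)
The plan is to replace $L$ by an $f$-stable superlattice $M$ of finite index, observe that $f$ permutes the subgroups of $M$ of a fixed finite index, and conclude from the finiteness of that set that some power of $f$ preserves $L$.

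First, set $V=L\otimes\QQ$ and $n=\deg\det(xId-f)=\rk L$. Since the characteristic polynomial $p(x)=\det(xId-f)$ lies in $\ZZ[x]$, Cayley--Hamilton gives $f^n\in\ZZ\text{-span}(1,f,\dots,f^{n-1})$. Define
\[
M=L+f(L)+f^2(L)+\cdots+f^{n-1}(L)\subseteq V.
\]
This is a finitely generated subgroup of $V$ (explicit generators: $f^i(e_j)$ where $e_j$ runs over a basis of $L$), hence a free $\ZZ$-module. It has the same $\QQ$-span as $L$, so $L\subseteq M$ is an inclusion of lattices of equal rank in $V$, and in particular the index $m=[M:L]$ is finite.

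Next I would verify that $M$ is $f$-stable. Clearly $f(M)=\sum_{i=1}^{n}f^i(L)$, and $f^n(L)\subseteq M$ by Cayley--Hamilton, so $f(M)\subseteq M$. Moreover $f$ is an isometry, so $\det f=\pm 1$; viewing $f$ as an endomorphism of $M$, its determinant computed in any $\ZZ$-basis of $M$ agrees with $\det f$ on $V$, so $[M:f(M)]=|\det f|=1$. Hence $f(M)=M$, i.e.\ $f$ restricts to an automorphism of the abelian group $M$.

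Finally I would invoke the fact that a finitely generated abelian group has only finitely many subgroups of any fixed index (for instance, all such subgroups contain $mM$, and $M/mM$ is finite). Therefore $f$ permutes the finite set of index-$m$ subgroups of $M$, one of which is $L$. Thus the $f$-orbit of $L$ is finite, and some $n\in\NN$ satisfies $f^n(L)=L$; since $f^n$ is already an isometry of $V$, it lies in $O(L)$. There is no serious obstacle: the only point worth care is the equality $f(M)=M$, which is the reason we need the hypothesis that $f$ is an isometry (so $\det f=\pm 1$) rather than merely having integer characteristic polynomial.
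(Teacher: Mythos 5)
Your proof is correct and follows essentially the same route as the paper: your $M=L+f(L)+\cdots+f^{n-1}(L)$ is exactly the module $\ZZ[f]L$ used there, and both arguments finish by a pigeonhole on a finite object attached to the pair $L\subseteq M$ (you use the finitely many index-$m$ subgroups of $M$, the paper uses the finite quotient $\ZZ[f]L/k\ZZ[f]L$). Your explicit verification that $f(M)=M$ via $\det f=\pm1$ is a point the paper leaves implicit, and it is handled correctly.
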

\begin{proof}
Since the characteristic polynomial of $f$ is integral, the $\ZZ$-module $\ZZ[f]L$ is finitely generated and of the same rank as $L$. Consequently, for the index $k=[\ZZ[f]L:L]$ we get the chain of inclusions
\[k \ZZ[f]L \subseteq L \subseteq \ZZ[f]L.\]
Conclude by taking $n\in \NN$ such that $f^n$ acts as the identity on the finite quotient $\ZZ[f]L/k\ZZ[f]L$. 
\end{proof}
We now have all the ingredients for the
\begin{proof}[Proof of Theorem \ref{thm:main} for Tori and Enriques surfaces]
A combination of Lemmas \ref{lem:iso_QQ} and \ref{lem:iso_ZZ} provides us with isometries of $L \in \{3U, U\oplus E_8\}$ with spectral radius some power of the desired Salem number. After raising the isometries to some sufficiently divisible power, we can assume that they satisfy
the conditions of Lemmas \ref{lem:torelli_enriques} and \ref{lem:torelli_tori}.
\end{proof}
The same argument completes the proof of the main theorem for non-projective K3 surfaces. It remains for us to control the positivity of the isometries to prove the result for projective K3 surfaces as well.

\begin{proposition}\label{prop:preserves_chamber}
	Let $f \in O(N)$ be an isometry of a hyperbolic lattice $N$ with characteristic polynomial a Salem polynomial $s(x)$.
	If 
	\[|\det N|> 4 \disc s(x),\]
	then $f$ preserves a chamber of the positive cone.
\end{proposition}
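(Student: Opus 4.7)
The plan is to invoke Theorem \ref{thm:obstructing_roots} and verify that $f$ has no obstructing roots. A Salem isometry automatically lies in $O^+(N)$, since its eigenvectors for $\lambda$ and $\lambda^{-1}$ are null and, because $\lambda,\lambda^{-1}>0$, can be normalised into a common component of the light cone, which $f$ then preserves. By the remark following Theorem \ref{thm:obstructing_roots} it suffices to exclude both cyclic roots and roots $r\in\Delta_N$ for which $r^\perp\cap\gamma$ is positive definite, where $\gamma$ is the real plane spanned by the $\lambda^{\pm 1}$-eigenspaces.

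Cyclic roots vanish for purely algebraic reasons. Since $s(x)$ is irreducible of degree $d=\rk N$ it is the minimal polynomial of $f$, so $\QQ[f]r=N\otimes\QQ$ for every nonzero $r$, and a relation $r+f(r)+\cdots+f^{i}(r)=0$ would force $s(x)\mid 1+x+\cdots+x^{i}$. The latter is impossible because the zeros of $1+x+\cdots+x^{i}$ are roots of unity while $\lambda>1$.

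The core of the argument deals with the second type of obstructing root $r$. I pass to $M:=\ZZ[f]r\subseteq N$, which by irreducibility of $s(x)$ has full rank $d$, hence finite index, giving $|\det N|\le [N:M]^{2}|\det N|=|\det M|$. Identifying $M$ with the order $\ZZ[f]\subset K:=\QQ[x]/s(x)$ via $r\mapsto 1$ converts the bilinear form on $M$ into the standard $f$-invariant shape $B_{\alpha}(x,y)=\operatorname{tr}_{K/\QQ}(\alpha\,\bar x y)$ for some $\alpha$ in the totally real subfield $k=\QQ(f+f^{-1})$, where $\bar{\phantom{x}}$ is the involution $f\leftrightarrow f^{-1}$. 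A Gram-matrix computation in the basis $1,f,\ldots,f^{d-1}$ and comparison with the trace form of $s(x)$ then yield
\[|\det M|=N_{k/\QQ}(\alpha)^{2}\,\disc s(x).\]

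It remains to bound $|N_{k/\QQ}(\alpha)|$ from above. Two constraints pin down $\alpha$. The identity $r^{2}=-2$ together with $B_{\alpha}(1,1)=2\operatorname{tr}_{k/\QQ}(\alpha)$ gives $\operatorname{tr}_{k/\QQ}(\alpha)=-1$. Decomposing $M\otimes\RR$ along the real places of $k$ splits $B_{\alpha}$ as an orthogonal sum of blocks whose signatures are controlled by the signs $\operatorname{sign}\sigma(\alpha)$: hyperbolicity of $N$ forces $\sigma(\alpha)<0$ at every non-Salem place, while the positive definiteness of $r^{\perp}\cap\gamma$ translates, in the Salem block with $r\leftrightarrow 1$, into $\sigma(\alpha)<0$ at the Salem place too. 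Thus $\alpha$ is totally negative with trace $-1$, and AM--GM applied to $-\sigma_{1}(\alpha),\ldots,-\sigma_{d/2}(\alpha)$ yields $|N_{k/\QQ}(\alpha)|\le(2/d)^{d/2}\le 1/4$ for $d\ge 4$. Consequently $|\det M|\le \disc s(x)/16$, contradicting the hypothesis $|\det N|>4\disc s(x)$. The most delicate step will be the signature bookkeeping that converts the geometric statement ``$r^{\perp}\cap\gamma$ positive definite'' into the sign of $\sigma(\alpha)$ at the Salem place; once that translation is in hand, the rest of the argument is routine trace-form algebra followed by AM--GM.
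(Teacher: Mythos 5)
Your proposal is correct and follows essentially the same route as the paper: both reduce positivity to the absence of obstructing roots via Theorem \ref{thm:obstructing_roots}, bound $|\det N|$ by $|\det \ZZ[f]r|$ for an obstructing root $r$, express that determinant as $N_{k/\QQ}(\alpha)^2\disc s(x)$ (the paper does this in an eigenbasis via a Vandermonde determinant, where your $-\sigma_i(\alpha)$ are its $|y_i|^2$ and your Salem-place $\sigma(\alpha)$ is its $x_1x_2$), and then maximize the product subject to the sum constraint coming from $r^2=-2$ --- your AM--GM step is exactly the paper's supremum computation. The only point to tidy up is the case $d=2$ (no non-Salem places), where your bound degenerates to $|N_{k/\QQ}(\alpha)|\le 1$ rather than $1/4$; this is precisely the case responsible for the constant $4$ in the hypothesis, and it still yields $|\det \ZZ[f]r|\le \disc s(x)<4\disc s(x)$, so the contradiction survives.
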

\begin{proof}
Let $L$ and $f$ be as in the proposition and denote by
\[\pi: S\otimes \RR \rightarrow \ker(f+f^{-1}-\lambda-\lambda^{-1})\] 
the orthogonal projection where $\lambda>1$ is a root of $s(x)$.
Suppose that $f$ does not preserve a chamber. Then, by Theorem \ref{thm:obstructing_roots}, there is an obstructive root $r\in L$. This means that $r^2=-2$ and $r^\perp$ crosses the geodesic $\gamma$ of $f$, i.e. $\pi(r)^2<0$.
Since $s(x)$ is irreducible over $\QQ$, $\ZZ[f]r$ 
is a sublattice of full rank, and hence
\[|\det N|\leq |\det \ZZ[f]r|.\]
The basic idea at this point is that the obstructing roots modulo the action of $f$ lie in some compact fundamental domain in $N\otimes \RR$ depending only on $s(x)$. Then we can maximize $|\det \ZZ[f]r|$ over all $r$ in this fundamental domain. 
We extend the bilinear form to a $\CC$-linear form on $N\otimes \CC$ and compute the determinant in an eigenbasis of $f$. 
We can find $u_1,u_2 \in N \otimes \RR$ and $v_i \in N \otimes \CC$ such that 
\[f(u_1)=\lambda u_1, \;\; f(u_2)=1/\lambda u_2, \;\;f(v_i)=\alpha_iv_i, \;\;i\in \{1,\dots, k\} \]
where $\lambda>1, 1/\lambda$, $\alpha_i,\overline{\alpha}_i$ are the complex roots of $s(x)$ and $\deg s(x)=2k+2$.
After rescaling, we may assume that $\langle u_1,u_2 \rangle=1$ and $\langle v_i,\overline{v_i} \rangle=-1$ for $i \in \{1, \dots, k\}$. Now, write 
\[r=x_1u_1+x_2u_2+\sum_{i=1}^k (y_i v_i + \overline{y_iv_i})\] 
for $x_1,x_2 \in \RR,y_i \in \CC$. The Van-der-Monde determinant yields that
\[|\det \langle f^i(r),f^j(r)\rangle|= |x_1x_2|^2 \prod_{i=1}^k|y_i|^4 \disc s.\]
Since $r$ is obstructing, $x_1x_2=\langle x_1u_1,x_2u_2\rangle \in [-2,0)$ and 
in these coordinates $r^2=x_1x_2 - \sum_{i=1}^k |y_i|^2=-2$, i.e. the coordinates of $r$ lie in the set
\[K=\left\{(x,y) \in \RR^{2}\times \CC^{k} : x_1x_2 \in [-2,0), \; x_1x_2 - \sum_{i=1}^k |y_i|^2=-2 \right\}.\]
Then, assuming $k\neq 0$,
\begin{eqnarray}
|\det N|							&\leq &|\det \langle f^i(r),f^j(r)\rangle|\\
 &\leq& \sup \left\{|x_1x_2| \prod_{i=1}^k|y_i|^2 : (x,y) \in K\right\}^2 \disc s\\
									&=& \sup_{c \in (0,2]} c^2 \cdot \sup \left\{\prod_{i=1}^k |y_i|^2 : \sum_i |y_i|^2=2-c\right\}^2\disc s\\
									&=& \left[\sup_{c \in (0,2]} c(2-c)^{k} \sup \left\{\prod_{i=1}^k |y_i|^2 : \sum_i |y_i|^2=1\right\}\right]^2\disc s\\
									&=&\left[\frac{2}{1+k}\left(\frac{2}{k}\right)^k\left(1-\frac{1}{1+k}\right)^k\right]^2\disc s\\
									&\leq & \disc s							
\end{eqnarray}
In line (5) we have used that
 \[1/k^k=\max\left\{\prod_{i=1}^k y_i^2 \mid (y_1,\dots,y_k) \in \RR^k,  \sum_{i=1}^{k} y_i^2=1\right\}.\]
Here the maxima lie at $|y_i|=1/\sqrt{k}$, $i \in \{1,\dots,k\}$.
For the case of $k=0$ one obtains $4\disc s$.
\end{proof}



\begin{proof}[Proof of Theorem \ref{thm:main2}]
%
	For any natural number $n$ let $s_n(x)$ denote the minimal polynomial of $\lambda^n$, and set $L_{K3}=3U\oplus 2 E_8$.
	By Lemmas \ref{lem:iso_QQ} and \ref{lem:iso_ZZ}, for some $n \in \NN$ we get an isometry
	$f\in O(L_{K3})$ with characteristic polynomial $s_n(x)(x-1)^{22-d}$  and such that $S=\ker s_n(f)$ is hyperbolic. 
    
	What remains is to modify $S=\ker s_n(f)$ in order to increase the determinant of $S$ such that $f|S$ preserves a chamber by Proposition \ref{prop:preserves_chamber}.
	Set $R=S^\perp=\ker(f-id)$.
	The primitive extension $S\oplus R\hookrightarrow L_{K3}$, provides us with an isomorphism of discriminant quadratic forms $q_S \cong q_R(-1)$. 
	By Chebotarev's density theorem there are infinitely many prime ideals $\mathfrak{p}<\O_k$ of degree one, split in $K/k$ such that 
	\[p \equiv 1 \mod (8\det R)\] 
	where $(p)=\ZZ\cap \mathfrak{p}$. 
	Choose one such $\mathfrak{p}$ with $p>\disc s_n(x)$.
	We can find $l\in \NN$ and $t \in \O_k$ with $\mathfrak{p}^l=t\O_k$, and then 
	\[|\det S(t^{2})|=|\det S|p^{2l}>\disc s_n(x).\]
	For primes $p'\neq p$, $S(t^{2})\otimes \ZZ_{p'} \rightarrow S\otimes \ZZ_{p'}$, $x\mapsto tx$ is an isometry. Hence, 
	\[(q_{S(t^{2})})_{p'}\cong (q_{S})_{p'} \cong (q_{R})_{p'}(-1)\cong (q_{R(p^{l})})_{p'}(-1).\]
	By Lemma \ref{lem:twist_split}	
	\[(q_{S(t^{2})})_{p}\cong p^{-2l} \left(\begin{matrix} 0 & 1 \\ 1 & 0 \end{matrix}\right)\cong p^{-2l}\left(\begin{matrix} 1 & 0 \\ 0 & -1 \end{matrix}\right).\]
	Further, a direct computation reveals 
	\[(q_{R(p^{2l})})_{p}(-1)\cong p^{-2l} \left(\begin{matrix} 1 & 0 \\ 0 & \det R \end{matrix}\right).\]
	The two forms are isomorphic if and only if both $-1$ and $\det R$ are of the same square class in $\ZZ_p^\times/\left(\ZZ_p^\times\right)^2$.
	This is computed by the Legendre symbols $\left(\frac{-1}{p}\right)$ and $\left(\frac{\det R}{p}\right)$.
	Since $p\equiv 1 \mod (8\det R)$, we have $\left(\frac{-1}{p}\right)=1$ and $\left(\frac{\det R}{p}\right)=\left(\frac{p}{\det R}\right)=1$.
	Piecing together the isometries for the different primes, we have constructed a glue map $q_{S(t^2)}\cong q_{R(p^{2t})}(-1)$. Its graph provides us with an overlattice of $S(t^2)\oplus R(p^{2t})$ isomorphic to $L_{K3}$.
	In other words, we have a primitive embedding of $S(t^{2})$ into $L_{K3}$ with orthogonal complement $R(p^{2l})$. By construction, $f|S(t^{2})$ preserves a chamber, and after replacing $f$ by a sufficiently divisible power $f^k$, we may glue it to the identity on $R(p^{2t})$ to obtain an isometry $f^k|_{S(t^{2})} \oplus id_{R(p^{2t})}$ which extends to $L$. We can apply Lemma \ref{lem:torelli_k3} (2) to get a projective K3 surface $X$ and $F \in \Aut(X)$ with dynamical degree some power of $\lambda$. 
\end{proof}

\subsection{Proof of Lemma \ref{lem:iso_QQ}}
Except for the signature condition, Lemma \ref{lem:iso_QQ} is a special case of \cite[Cor. 9.2, Prop. 11.9]{bayer-fluckiger:rationa_isometries}. In what follows, we inspect the original proof. For the readers convenience, we recall some of the notation involved.
We need only the following special case: $k=\QQ$ and $\Sigma_k$ is the set of its places.
Then $q$ is the quadratic form on $L\otimes \QQ$. An isometry
$t\in O(q)$ induces the structure of a self-dual torsion $\QQ[x]$-module on $L_\QQ$ via $p(x).v=p(t)v$ for $v\in L_\QQ$ and $p(x) \in \QQ[x]$.
We set 
\[M_0=\left(\QQ[x]/(x-1)\right)^{22-r}, \quad M_1=\QQ[x]/s(x), \quad \mbox{ and } \quad M=M_0\oplus M_1.\]
Then $\mathcal{C}_{M,q}$ is the set of all collections of forms $C=\{q_i^\nu \}$ for $i \in I_0=\{0,1\}$ and $\nu \in \Sigma_k$, such that $q_i^\nu$ has an isometry with module $M_i\otimes \QQ_\nu$ and $q_0^\nu \oplus q_1^\nu$ is isomorphic to the localization $q^\nu=q\otimes \QQ_\nu$ of $q$ at $\nu$. 
For $i\in I_0$, we set 
\[T_i(C)=\{ \nu \in \Sigma_k \mid w(q_i^\nu)=1 \}\]
where $w(q_i^\nu)$ is the Hasse invariant of $q_i^\nu$. 
Let $\mathcal{F}_{M,q}$ be the subset of $\mathcal{C}_{M,q}$ such that for all $i \in I_0$, $T_i(C)$ is a finite set.

 The main step involved is
\begin{theorem}\cite[Thm. 10.8]{bayer-fluckiger:rationa_isometries}\label{thm:existence_rat_module}
	Let $M$ be a self-dual torsion $k[x]$-module which is finite dimensional as a $k$-vector space. Suppose that the quadratic form $q$ over the global field $k$ has an isometry with module $M$ over $k_\nu$ for all places $\nu$ of $k$. Then $q$ has an isometry with module $M$ if and only if there exists a collection $C=\{q_i^\nu \} \in \mathcal{F}_{M,q}$ such that for all $i \in I_0$, the cardinality of $T_i(C)$ is even. In this case $q_i^\nu=q_i\otimes \kappa_\nu$.
\end{theorem}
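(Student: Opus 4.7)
The strategy is to invoke Bayer-Fluckiger's \cite[Cor.~9.2, Prop.~11.9]{bayer-fluckiger:rationa_isometries}, which, applied to the rational quadratic form $q$ on $L\otimes\QQ$, already yields the biconditional characterising existence of an isometry of $L\otimes\QQ$ with characteristic polynomial $s(x)(x-1)^{\rk L-d}$ in terms of the conditions (1) and (2). The only content that is not immediate from the cited corollary is the signature refinement on $\ker s(f)$, and this is what I would extract by inspecting the proof of Theorem \ref{thm:existence_rat_module}.

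Set $M_0=\QQ[x]/(x-1)^{\rk L-d}$ and $M_1=\QQ[x]/s(x)$. Bayer-Fluckiger's construction builds the desired isometry by producing, at each place $\nu$ of $\QQ$, local forms $q_0^\nu$ and $q_1^\nu$ on $M_0\otimes\QQ_\nu$ and $M_1\otimes\QQ_\nu$ respectively, whose orthogonal sum is $q\otimes\QQ_\nu$ and which satisfy the global parity condition of Theorem \ref{thm:existence_rat_module}. The signature of $\ker s(f)$ is exactly the signature of $q_1^\RR$, so I would analyse which real signatures are attainable. Over $\RR$ the Salem polynomial factors as
\[
s(x)=(x-\lambda)(x-\lambda^{-1})\prod_{i=1}^{k}(x^2-2\cos\theta_i\,x+1),\qquad d=2k+2,
\]
and any self-dual $\RR[x]/s(x)$-module decomposes orthogonally into a forced hyperbolic plane (from the real pair $\lambda,\lambda^{-1}$) together with $k$ planes, one per complex conjugate pair $e^{\pm i\theta_i}$, each of which can \emph{independently} be chosen of signature $(2,0)$, $(0,2)$, or $(1,1)$. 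Hence every signature $(1+2p,1+2n)$ with $p+n\le k$ is admissible for $q_1^\RR$. Choosing $p=0$ gives $q_1^\RR$ hyperbolic of signature $(1,d-1)$, yielding the refinement in case (1); and when $L$ has signature $(3,\rk L-3)$, choosing $p=1$, $n=k-1$ gives $q_1^\RR$ of signature $(3,d-3)$. The leftover signature forced on $q_0^\RR$ is always realisable, because the $(x-1)$-part places no constraint on the real form beyond matching ranks, and the rank condition $\rk L-d\ge 2$ in case (1) (respectively $\ge 0$ in the projective refinement) leaves enough room.

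The step I expect to demand the most care is verifying that these archimedean adjustments remain compatible with the global Hasse-invariant parity condition built into $\mathcal{F}_{M,q}$: a change of signature of $q_1^\RR$ flips $w(q_1^\RR)$ and hence alters the parity of $|T_1(C)|$, which must be rebalanced elsewhere in $C$. In case (1) the buffer $M_0$ of rank at least two absorbs this flip at a single well-chosen finite prime, and in case (2) the square-class condition $-s(1)s(-1)\in(\QQ^\times)^2$ is precisely what makes the remaining parity automatic via Hilbert reciprocity applied to the Hilbert symbols encoding the local Hasse invariants of $q_1^\nu$. Once this bookkeeping is discharged, Theorem \ref{thm:existence_rat_module} delivers an isometry $f$ with the prescribed characteristic polynomial and with the desired signature of $\ker s(f)$.
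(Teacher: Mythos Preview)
You have mixed up which statement you are proving. Theorem~\ref{thm:existence_rat_module} is quoted verbatim from \cite[Thm.~10.8]{bayer-fluckiger:rationa_isometries} and the paper offers no proof of it; it is used as a black box. Your proposal does not attempt to prove this theorem either: everything you wrote is a proof sketch for Lemma~\ref{lem:iso_QQ}, and indeed you explicitly \emph{invoke} Theorem~\ref{thm:existence_rat_module} rather than establish it. So as a proof of the stated theorem, the proposal is simply off target.

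If we instead read your text as an attempt at Lemma~\ref{lem:iso_QQ}, it is essentially the same argument the paper gives: obtain some isometry with the correct module via \cite[Prop.~11.9]{bayer-fluckiger:rationa_isometries}, then adjust the archimedean component of the collection $C$ to force the desired signature on $\ker s(f)$, and compensate the resulting flip in the Hasse-invariant parity at one finite place $p\in\Omega_{0,1}$ so that Theorem~\ref{thm:existence_rat_module} still applies. Two small corrections to your real analysis: on a $2$-plane where $f$ has eigenvalues $e^{\pm i\theta}$ with $\theta\notin\pi\ZZ$, the invariant form is necessarily definite, so signature $(1,1)$ is \emph{not} an option there and one must have $p+n=k$, not $p+n\le k$; and $M_0$ should be the semisimple module $\left(\QQ[x]/(x-1)\right)^{\rk L-d}$, not the single Jordan block $\QQ[x]/(x-1)^{\rk L-d}$. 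Neither slip affects the two signatures you actually need.
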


\begin{proof}[Proof of Lemma \ref{lem:iso_QQ}]
	By \cite[Prop. 11.9]{bayer-fluckiger:rationa_isometries}, we find $f\in O(q)$ with module $M$. Hence its characteristic polynomial has the desired form
	\[\det(xId-f)=s(x)(x-1)^{22-r}.\]
	If $q_1=q|S_\QQ$ has signature $(1,r-1)$, we are done, else $q_1$ has signature $(3,r-3)$. 
	By Theorem \ref{thm:existence_rat_module}, this provides us with a collection $C=\{q_i^\nu\} \in \mathcal{F}_{M,q}$ with
	$q_i^\nu=q_i\otimes \kappa_\nu$ such that $|T_i(C)|$ is even for $i\in \{0,1\}$.
	We set \[d_1\equiv\det q_1 \equiv s(1)s(-1) \mod \QQ^{\times 2}\]
	and
	\[d_0\equiv\det q_0\equiv d_1 \det q =-s(1)s(-1) \mod \QQ^{\times2}.\] Denote by $\Omega(M_i,d_i)$ the set of finite places $\nu$ of $\QQ$ such that for any $\epsilon \in \{0,1\}$ there is a quadratic space $Q$ over $\QQ_\nu$ with determinant $d_i$, Hasse invariant $w(Q)=\epsilon$ and which has an isometry with module $M_i$.
	By \cite[Prop. 11.9]{bayer-fluckiger:rationa_isometries} every finite place is in $\Omega( M_0,d_0)$. Hence 
	\[\Omega_{0,1}=\Omega(M_0,d_0) \cap \Omega(M_1,d_1)=\Omega(M_1,d_1),\] 
	which is non-empty by \cite[Lem. 9.4, 9.6]{bayer-fluckiger:rationa_isometries} and Chebotarev's density theorem for the degree two extension $\QQ[\lambda]$ of $\QQ[\lambda+\lambda^{-1}]$. 
	Choose a place $p\in \Omega_{0,1}$.
	We can define a new collection $\tilde{C}=\{\tilde{q}_i^\nu\}$ by
	$\tilde{q}_i^\nu=q_i^\nu$ for $i\in \{0,1\}$ and $\nu \neq p, \infty$, 
	For $\tilde{q}_i^\infty$, we take forms of signature $(1,d-1)$, respectively $(2,\rk L-2-d)$.
	At $p$ we just switch the Hasse invariants of $q_i^p$. By \cite[Lem. 9.6]{bayer-fluckiger:rationa_isometries}, $\tilde{C} \in \mathcal{C}_{M,q}$, and moreover $\tilde{C}\in \mathcal{F}_{M,q}$. Since the Hasse invariants have changed at two places, $|T_i(\tilde{C})|$ is still even.
	Finally, $\tilde{C}$ meets the conditions of Theorem \ref{thm:existence_rat_module} and the claim follows.
\end{proof}

\bibliographystyle{JHEPsort}
\bibliography{literature}{}

\providecommand{\href}[2]{#2}\begingroup\raggedright\begin{thebibliography}{10}

\bibitem{amerik:automorphisms_ihsm}
E.~Amerik and M.~Verbitsky, \emph{{C}onstruction of automorphisms of
  hyperk\"ahler manifolds},  \href{http://arxiv.org/abs/arXiv:1604.03079}{{\tt
  arXiv:1604.03079}}.

\bibitem{barth:automorphism_enriques}
W.~Barth and C.~Peters, \emph{Automorphisms of {E}nriques surfaces},
  \href{http://dx.doi.org/10.1007/BF01388435}{\emph{Invent. Math.} {\bf 73}
  (1983) 383--411}.

\bibitem{BHPV:compact_complex_surfaces}
W.~P. Barth, K.~Hulek, C.~A.~M. Peters and A.~Van~de Ven, \emph{Compact complex
  surfaces}, vol.~4.
\newblock Springer-Verlag, Berlin, second~ed., 2004.

\bibitem{bayer-fluckiger:rationa_isometries}
E.~Bayer-Fluckiger, \emph{Isometries of quadratic spaces},
  \href{http://dx.doi.org/10.4171/JEMS/541}{\emph{J. Eur. Math. Soc. (JEMS)}
  {\bf 17} (2015) 1629--1656}.

\bibitem{lenny:equivariant_witt}
E.~Bayer-Fluckiger and L.~Taelman, \emph{Automorphisms of even unimodular
  lattices and equivariant witt groups},
  \href{http://arxiv.org/abs/arXiv:1708.05540}{{\tt arXiv:1708.05540}}.

\bibitem{cantat:dynamical_degrees}
J.~Blanc and S.~Cantat, \emph{Dynamical degrees of birational transformations
  of projective surfaces}, \href{http://dx.doi.org/10.1090/jams831}{\emph{J.
  Amer. Math. Soc.} {\bf 29} (2016) 415--471}.

\bibitem{brandhorst_alonso:minimal_salem}
S.~Brandhorst and V.~Gonz{\'a}lez-Alonso, \emph{Automorphisms of minimal
  entropy on supersingular {K}3 surfaces},
  \href{http://arxiv.org/abs/arXiv:1609.02716}{{\tt arXiv:1609.02716}}.

\bibitem{cantat:classification}
S.~{Cantat}, \emph{{Dynamique des automorphismes des surfaces projectives
  complexes.}},
  \href{http://dx.doi.org/10.1016/S0764-4442(99)80294-8}{\emph{{C. R. Acad.
  Sci., Paris, S\'er. I, Math.}} {\bf 328} (1999) 901--906}.

\bibitem{diller_favre:dynamics_surface}
J.~Diller and C.~Favre, \emph{Dynamics of bimeromorphic maps of surfaces},
  {\emph{Amer. J. Math.} {\bf 123} (2001) 1135--1169}.

\bibitem{mcmullen:siegel_disk}
C.~T. McMullen, \emph{Dynamics on {$K3$} surfaces: {S}alem numbers and {S}iegel
  disks}, \href{http://dx.doi.org/10.1515/crll.2002.036}{\emph{J. Reine Angew.
  Math.} {\bf 545} (2002) 201--233}.

\bibitem{mcmullen:minimum}
C.~T. McMullen, \emph{Automorphisms of projective {K}3 surfaces with minimum
  entropy}, \href{http://dx.doi.org/10.1007/s00222-015-0590-z}{\emph{Invent.
  Math.} {\bf 203} (2016) 179--215}.

\bibitem{nikulin:quadratic_forms}
V.~V. Nikulin, \emph{Integer symmetric bilinear forms and some of their
  geometric applications}, {\emph{Izv. Akad. Nauk SSSR Ser. Mat.} {\bf 43}
  (1979) 111--177, 238}.

\bibitem{reschke:tori_salem}
P.~Reschke, \emph{Salem numbers and automorphisms of complex surfaces},
  \href{http://dx.doi.org/10.4310/MRL.2012.v19.n2.a18}{\emph{Math. Res. Lett.}
  {\bf 19} (2012) 475--482}.

\bibitem{reschke:abelian_salem}
P.~Reschke, \emph{Salem numbers and automorphisms of abelian surfaces},
  {\emph{Osaka J. Math.} {\bf 54} (2017) 1--15}.

\bibitem{uehara:rational_automorphism_entropy}
T.~Uehara, \emph{Rational surface automorphisms with positive entropy},
  {\emph{Ann. Inst. Fourier (Grenoble)} {\bf 66} (2016) 377--432}.

\end{thebibliography}\endgroup

\end{document}